\documentclass[a4paper,10pt]{amsart}
\usepackage[utf8]{inputenc}

\usepackage{graphicx,amssymb,amsmath,amsfonts,amsthm,amscd,hyperref,textcomp,relsize}

\newtheorem{proposition}{Proposition}

\newtheorem{lemma}{Lemma}
\newtheorem{corollary}{Corollary}
\newtheorem{theorem}{Theorem}

\theoremstyle{definition}\newtheorem{remark}{Remark}

\newcommand{\RR}{\mathbb R}
\newcommand{\QQ}{\mathbb Q}
\newcommand{\Fp}{{\mathbb F}_p}
\newcommand{\Fq}{{\mathbb F}_q}

\newcommand{\Ql}{\bar{\mathbb Q}_\ell}
\newcommand{\Dbc}{D^b_c}

\newcommand{\FF}{\mathcal F}
\newcommand{\GGG}{\mathcal G}
\newcommand{\HH}{\mathcal H}
\newcommand{\LL}{\mathcal L}
\newcommand{\R}{\mathrm R}

\newcommand{\AAA}{\mathbb A}

\newcommand{\Gm}{{\mathbb G}_m}

\newcommand{\ZZ}{\mathbb Z}

\newcommand{\Spec}{\mathrm{Spec}}

\newcommand{\vv}{\mathbf v}

\newcommand{\Vol}{\mathrm{Vol}}
\newcommand{\nullsp}{\mathrm{null}}

\title{On the rank of hypergeometric sheaves on higher dimensional tori}
\author{Antonio Rojas-Le\'on}
\address{Departamento de Álgebra \\
         Facultad de Matemáticas \\
         c/Tarfia s/n, 41012 Sevilla, SPAIN}
\email{arojas@us.es}

\begin{document}

\begin{abstract}
We prove an explicit formula for the generic rank of hypergeometric perverse sheaves on higher dimensional tori over a finite field, as defined by Gabber and Loeser. This allows to give an explicit estimate for the corresponding hypergeometric character sums for almost all primes $p$.
\end{abstract}

\maketitle

\renewcommand{\thefootnote}{}
\footnote{ORCID code: 0000-0003-1683-9487}
\footnote{Mathematics Subject Classification: 11L05, 11L07, 11T24, 19F27}
\footnote{Partially supported by PID2024-156912N, financed by MICIU/AEI/10.13039/501100011033 and FEDER, UE}

\section{Introduction}

Let $\Fq$ be a finite field of characteristic $p$, and $\Gm^r:={\mathbb G}^r_{m,\Fq}$ the split $r$-dimensional torus over $\Fq$. We will fix a prime $\ell\neq p$, and work on the derived category $\Dbc(\Gm^r,\Ql)$ of $\ell$-adic sheaves on $\Gm^r$.

In their fundamental article \cite{gabber1996faisceaux}, Gabber and Loeser defined and studied the properties of the so-called hypergeometric sheaves on $\Gm^r$, the higher dimensional analogues of the hypergeometric shaves whose theory (both in the $\ell$-adic and in the $D$-module categories) was developed by Katz in his book \cite{katz1990esa}.

These are perverse sheaves on $\Gm^r$, which (up to extension by negligible objects) can be characterized by having Euler characteristic one. They are constructed as a succesive convolution of building blocks, each of which is of the form $\iota_{!}(\LL_{\psi}\otimes\LL_\chi)[1]$, where $\iota:\Gm\hookrightarrow\Gm^r$ is a non-constant torus homomorphism, $\LL_\psi$ is the restriction to $\Gm$ of the Artin-Schreier sheaf associated to a fixed additive character $\psi:\Fq\to\Ql^\times$ (which we assume to be the pull-back of a character of $\Fp$, so it is $p$-Frobenius invariant) and $\LL_\chi$ is the Kummer sheaf on $\Gm$ associated to some multiplicative character $\chi:\Fq^\times\to\Ql^\times$ (see \cite[1.7]{deligne569application} for their definitions).

As explained in \cite[1.6,3.7.3,8.1.2]{gabber1996faisceaux}, there are different concepts of ``convolution'' in $\Gm^r$, depending on which direct image one takes to define it:
$$
K\ast_\ast L:=\R\mu_\ast(K\boxtimes L)$$
$$
K\ast_! L:=\R\mu_!(K\boxtimes L)$$
$$
K\ast_{int} L:=\mathrm{im}\left(\R\mu_!(K\boxtimes L)\to \R\mu_\ast(K\boxtimes L)\right)
$$
where $\mu:\Gm^r\times\Gm^r\to\Gm^r$ is the multiplication map. Consequently, there are at least three different types of hypergeometric objects. One way to solve this ambiguity \cite[3.6.4]{gabber1996faisceaux} is to work on the quotient category of $\Dbc(\Gm,\Ql)$ by the thick subcategory of negligible objects (ie. objects whose perverse cohomology sheaves have Euler characteristic zero). However, equivalent (perverse) objects in this quotient category may have different ranks, so working in the quotient category is not appropriate if one needs to deal with arithmetic applications of these objects.

Since we are interested in the these arithmetic applications, and the direct image with compact support is more suited for them, in this article we will always be working with the ``compactly supported'' convolution, that is, $K\ast L$ will always mean $K\ast_!L$.

Fix $n$ multiplicative characters $\chi_1,\ldots,\chi_n:\Fq^\times\to\Ql^\times$ (equivalently, a multiplicative character $\chi:(\Fq^\times)^n\to\Ql^\times$ given by $\chi(x_1,\ldots,x_n)=\chi_1(x_1)\cdots\chi_n(x_n)$, which we will denote by $\otimes_{i=1}^n\chi_i$), and $n$ non-zero column vectors $\mathbf a_1,\ldots,\mathbf a_n\in\ZZ^r$, which define morphisms $\iota_1,\ldots,\iota_n:\Gm\to\Gm^r$ given by $\iota_j(t)=(t^{a_{1j}},\ldots,t^{a_{nj}})$. Note that we do not assume the $\mathbf a_i$ to be primitive and, in particular, the $\iota_i$ are finite maps but not necessarily closed immersions. Let $A\in\mathcal M_{r,n}(\mathbb Z)$ be the matrix whose columns are ${\mathbf a}_1,\ldots,{\mathbf a}_n$, and denote by $\HH_{A,\chi}\in\Dbc(\Gm^r,\Ql)$ object
$$
\HH_{A,\chi}=\iota_{1!}(\LL_\psi\otimes\LL_{\chi_1})[1]\ast\cdots\ast\iota_{n!}(\LL_\psi\otimes\LL_{\chi_n})[1],
$$
which is a perverse sheaf by \cite[8.1.3]{gabber1996faisceaux}. In fact, by \cite[8.6.1]{gabber1996faisceaux} every perverse sheaf on $\Gm^r$ defined over some finite field with Euler characteristic one is a multiplicative translate of one of this form, up to extension by ``negligible'' sheaves.

By Grothendieck's trace formula, the trace of a geometric Frobenius element acting on the stalk of $\HH_{A,\chi}$ at a geometric point over $\mathbf t=(t_1,\ldots,t_r)\in\Gm^r(\Fq)=(\Fq^\times)^r$ is given by
$$
\mathrm{Hyp}_{A,\chi}(\mathbf t)=(-1)^n\sum_{\substack{x_1^{a_{i1}}\cdots x_n^{a_{in}}=t_i \\ i=1,\ldots,r}}\psi(x_1+\cdots+x_n)\chi_1(x_1)\cdots\chi_n(x_n).
$$
Moreover, by Weil II \cite[Th\'eor\`eme 3.3.1]{deligne1980conjecture}, $\HH_{A,\chi}$ is mixed of weights $\leq n$ (since each factor $\iota_{i!}(\LL_\psi\otimes\LL_{\chi_i})[1]$ is pure of weight $1$). By the general structure of perverse sheaves \cite{beilinson1982faisceaux}, if the support $T$ of $\HH_{A,\chi}$ has dimension $k$, then there exists a non-empty open set $U\subseteq T$ (we will later see that $T$ is a connected subtorus, so $U$ is actually a dense open set) such that $\HH_{A,\chi|U}=\FF[k]$ for some lisse sheaf $\FF$ on $U$, necessarily mixed of weights $\leq n-k$. In particular, if $D$ is the rank of $\FF$, we get an estimate
$$
|\mathrm{Hyp}_{A,\chi}(\mathbf t)|\leq D\cdot q^{\frac{n-k}{2}}
$$
for every $\mathbf t\in U(\Fq)$.

In order for this estimate to be useful in practice, one needs to known (1) an explicit open set $U\subseteq T$ on which $\HH_{A,\chi}$ is a shifted lisse sheaf and (2) the rank of such sheaf. These are the two questions that we attempt to answer in this article.

To state the main theorem, we define two positive integers $d(A),v(A)$ associated to the matrix $A$. $d(A)$ is the product of the invariant factors of $A$ -- equivalently, the greatest common divisor of the $k\times k$ minors of $A$, where $k=\mathrm{rank}(A)$. If $k=n$, we set $v(A)=1$. Otherwise, let $P_0\in\mathcal M_{n,n-k}$ be a matrix whose columns form a basis for $\nullsp(A)$, the right nullspace of $A$, and let $\Delta$ be the convex hull in $\RR^{n-k}$ of the finite set containing the rows of $P_0$ and the origin. Then $v(A):=(n-k)!\Vol(\Delta)$. Since the matrix $P_0$ has full rank $n-k$, $v(A)>0$. And $v(A)$ does not depend on the choice of a basis for $\nullsp(A)$: for any other choice, the corresponding matrix $P'_0$ would be obtained from $P_0$ by right multiplication by an invertible matrix in $\mathcal M_{n-k}(\ZZ)$, so the corresponding polytopes $\Delta$ and $\Delta'$ would be related by a unimodular automorphism of $\RR^{n-k}$, which preserves volumes. Finally, for any positive integer $N$ we define $N_p$ and $N_{\hat p}$ to be the largest power of $p$ that divides $N$ and its prime to $p$ part, so that $N=N_pN_{\hat p}$.

The main result of this article is:

\begin{theorem}\label{main}
 Let $A\in\mathcal M_{r,n}(\ZZ)$ be a non-zero matrix of rank $k$. There exist a finite set of primes $S$, a $k$-dimensional subtorus $T\subseteq{\mathbb G}^r_{m,\ZZ[1/S\ell]}$ and a closed subscheme $Z\subseteq T$ such that for every finite field $\Fq$ of characteristic $p\not\in S$, every specialization $\Spec(\Fq)\to\Spec(\ZZ[1/S\ell])$ and every character $\chi:(\Fq^\times)^n\to\Ql^\times$, the perverse sheaf $\HH_{A,\chi}$ is supported on $T_{\Fq}$, the open subset $U_{\Fq}:=(T\backslash Z)_{\Fq}$ is non-empty and the restriction of $\HH_{A,\chi}$ to $U_{\Fq}$ is a shifted lisse sheaf of rank $d(A)_{\hat p}v(A)$. All of $S$, $T$ and $Z$ can be explicitly computed in terms of $A$.
\end{theorem}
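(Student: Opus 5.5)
The first step rewrites $\HH_{A,\chi}$ as a single direct image. Let $\iota_A:\Gm^n\to\Gm^r$ be the homomorphism $\mathbf x\mapsto (\prod_j x_j^{a_{ij}})_i$. Proper base change and the Künneth formula identify the iterated compactly supported convolution with
$$\HH_{A,\chi}=\R\iota_{A!}\bigl(\LL_{\psi(x_1+\cdots+x_n)}\otimes\LL_{\chi_1(x_1)}\otimes\cdots\otimes\LL_{\chi_n(x_n)}\bigr)[n].$$
The image of $\iota_A$ is a closed subgroup of dimension $k$. Its identity component $T$ is the subtorus whose character group is the saturation of the row lattice of $A$. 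By Smith normal form $A=UDV$ with $U,V$ unimodular, $\iota_A$ factors as an automorphism of $\Gm^n$, then a projection $\Gm^n\to\Gm^k$ with kernel $K\cong\Gm^{n-k}$, then the isogeny $[d_1,\dots,d_k]:\Gm^k\to\Gm^k$, then a closed immersion onto $T$ followed by an automorphism of $\Gm^r$. The isogeny has degree $d(A)=\prod d_i$. In characteristic $p$, its geometric fibres consist of exactly $d(A)_{\hat p}$ points, since the $p$-part is radicial. Translating, the sheaf $\HH_{A,\chi}$ is supported on (a translate-free copy of) $T$, as the theorem claims, and any $S$ must contain no more than the primes needed to make all of this work over $\ZZ[1/S\ell]$.

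The next step computes the generic stalk. For $\mathbf t$ in the image, the fibre $\iota_A^{-1}(\mathbf t)$ is a disjoint union of $d(A)_{\hat p}$ cosets of $K$. Each coset has the form $\mathbf x_0\cdot P_0(\Gm^{n-k})$, with parametrisation $\mathbf y\mapsto (x_{0j}\mathbf y^{\mathbf p_j})_j$, where $\mathbf p_j$ are the rows of $P_0$. On each coset we therefore get
$$\R\Gamma_c\bigl(\Gm^{n-k},\LL_{\psi(f_{\mathbf x_0})}\otimes\LL_{\chi'}\bigr),\qquad f_{\mathbf x_0}(\mathbf y)=\sum_j x_{0j}\mathbf y^{\mathbf p_j},$$
where $\chi'$ is a Kummer character. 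This is a Laurent polynomial whose Newton polytope is the convex hull of the rows $\mathbf p_j$, and all coefficients are nonzero. The plan is to invoke the theorem of Adolphson–Sperber and Denef–Loeser on exponential sums, in its version twisted by Kummer characters. When $f$ is nondegenerate with respect to its Newton polytope $\Delta_\infty$ and the origin lies in the interior of $\Delta_\infty$, this cohomology is concentrated in degree $n-k$ and has dimension $(n-k)!\Vol(\Delta_\infty)$.

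The issue of the origin is handled by convexity. Since $AP_0=0$, if the origin is not in the interior of the hull of the rows, the correct count is the volume of $\Delta$ (the hull with the origin adjoined). For that case I would use the version of the theorem that applies when $f$ is "convenient up to the origin". Equivalently, I would prove that the Euler characteristic equals $\pm(n-k)!\Vol(\Delta)$ and that the cohomology is concentrated in one degree. Here I expect to use Kouchnirenko/Adolphson–Sperber for the Euler characteristic together with Denef–Loeser's purity/concentration. Summing over the $d(A)_{\hat p}$ cosets gives rank $d(A)_{\hat p}v(A)$ in degree $-k$ after the shift by $n$.

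The main obstacle is making the nondegeneracy uniform. For each face $\sigma$ of $\Delta$ not containing $0$, nondegeneracy of $f_{\mathbf x_0}$ is a condition on the coefficients $x_{0j}$, and these are essentially functions of $\mathbf t$. So the plan is to show that the set of degenerate $\mathbf x_0$ is the preimage of a proper closed subset defined over $\ZZ$. It is cut out by discriminant-type (principal $A$-determinant) equations, whose image in $T$ we take as $Z$. We also need to show, after inverting finitely many primes, that these equations are not identically satisfied on each fibre $T_{\Fp}$. A secondary difficulty comes from non-primitive $\mathbf a_i$ and repeated rows $\mathbf p_j$: several monomials of $f$ can share an exponent, and then their coefficients add. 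This must be excluded by putting the locus where such sums vanish into $Z$, or controlled in some other way. Finally, generic lisseness on $U$ follows because the stalk ranks are constant there and $\HH_{A,\chi}$ is perverse. The stalk dimension being constant and concentrated in one degree on the open set, combined with Deligne's generic base change, then gives a shifted lisse sheaf.
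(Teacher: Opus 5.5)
Your reduction is the same as the paper's. You write $\HH_{A,\chi}=\R[A]_!(\LL_{\psi(x_1+\cdots+x_n)}\otimes\LL_\chi)[n]$, use the Smith normal form, and on each of the $d(A)_{\hat p}$ cosets of $K=[P_0](\Gm^{n-k})$ get a twisted exponential sum of $f_{\mathbf x_0}$. The paper packages these fibre computations as the GKZ sheaf $\GGG_{P_0,\eta}$ pulled back along $[P_1]$ and then quotes Fu. Two small corrections:
\begin{itemize}
\item The concentration and dimension theorem (Denef--Loeser, and Fu's twisted version) is already stated for the polytope with the origin adjoined. It needs only non-degeneracy and full dimension; interiority of the origin is what gives purity, not concentration.
\item Lisseness on $U$ comes from Katz's result that a sheaf of perverse origin with constant rank on a smooth connected scheme is lisse. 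Deligne's generic base change is not the relevant tool here.
\end{itemize}

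The genuine gap is the step you yourself call the main obstacle, and it is the real content of the theorem beyond the rank formula. You need, for $p$ outside an explicit finite set $S$, that the coefficient vectors $\mathbf x_0$ with $f_{\mathbf x_0}$ non-degenerate with respect to $\Delta$ form a non-empty set in characteristic $p$. You only announce this. The mechanism you suggest is to take characteristic-zero discriminants (principal $A$-determinants) and check they are not identically zero mod $p$. That cannot work, because the characteristic-$p$ degeneracy locus is not the reduction of the characteristic-zero one.

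Take $A=(2\;-1\;1)$, so the rows of $P_0$ are $(0,1),(1,1),(1,-1)$. For the edge $\tau$ through $(1,1)$ and $(1,-1)$ one has $f_\tau=a_2xy+a_3xy^{-1}$, with $x\partial_xf_\tau=a_2xy+a_3xy^{-1}$ and $y\partial_yf_\tau=a_2xy-a_3xy^{-1}$.
\begin{itemize}
\item In characteristic $0$ these never vanish together, so this face imposes no condition at all.
\item In characteristic $2$ they coincide and vanish wherever $a_2y^2=a_3$. Hence every coefficient vector is degenerate, and the generic rank is indeed $2$ rather than $v(A)=3$.
\end{itemize}

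The missing idea is the condition $(C_p)$: for every face $\tau$ of $\Delta$ not containing the origin, the reduction of $P_{0,\tau}$ modulo $p$ still has rank $\dim\tau+1$. Under $(C_p)$ the argument goes as follows.
\begin{itemize}
\item The singular-point equations $\sum_{i\in I_\tau}a_i\bar p_{il}\mathbf y^{\mathbf p_i}=0$ impose $\dim\tau+1$ independent linear conditions on the $a_i\mathbf y^{\mathbf p_i}$.
\item By homogeneity of $f_\tau$, the singular locus of $f_{\mathbf a,\tau}$ is stable under a subtorus of dimension $n-k-\dim\tau$.
\item A dimension count then shows that the degenerate coefficients lie in a proper closed subset.
\item $S$ is the explicit set of primes dividing the gcd of the $(\dim\tau+1)\times(\dim\tau+1)$ minors of some $P_{0,\tau}$.
\end{itemize}
A soft spreading-out argument (Chevalley constructibility over $\ZZ$ plus generic non-degeneracy over $\mathbb Q$) would give some finite $S$, though not an explicit one, but you would still have to carry it out.

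Your set-up does have one advantage. Non-degeneracy of $f_{\mathbf x_0}$ is invariant under translating $\mathbf x_0$ by $K$, so its image in $T$ is proper closed as soon as the degenerate locus is proper in $\Gm^n$. You therefore do not need the paper's separate surjectivity argument for $\phi[P_1]$.
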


See section \ref{sec-crit} for the precise definitions of $S,T$ and $Z$. We can then give an explicit estimate for the hypergeometric sum:

\begin{corollary}
 With the previous notation, for any $\mathbf t\in U(\Fq)$ we have the estimate
 $$
 |\mathrm{Hyp}_{A,\chi}(\mathbf t)|\leq d(A)_{\hat p}v(A)q^{\frac{n-k}{2}}.
 $$
\end{corollary}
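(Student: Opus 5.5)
The corollary follows from Theorem \ref{main} together with Deligne's weight theory, exactly as sketched in the introduction. Fix $\mathbf t\in U(\Fq)$ and let $\bar{\mathbf t}$ be a geometric point over it. By Theorem \ref{main}, the restriction of $\HH_{A,\chi}$ to $U_{\Fq}$ is $\FF[k]$ for a lisse sheaf $\FF$ of rank $D:=d(A)_{\hat p}v(A)$. Here $k=\dim T$, and $T$ is the $k$-dimensional subtorus in the theorem.

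First, relate the sum to Frobenius. By the Grothendieck trace formula recalled in the introduction, $\mathrm{Hyp}_{A,\chi}(\mathbf t)$ is the alternating trace of geometric Frobenius $F_{\mathbf t}$ on the stalk complex $(\HH_{A,\chi})_{\bar{\mathbf t}}$. Since $\mathbf t\in U$, this complex is $\FF_{\bar{\mathbf t}}$ placed in degree $-k$. Hence
$$\mathrm{Hyp}_{A,\chi}(\mathbf t)=(-1)^k\,\mathrm{Tr}(F_{\mathbf t}\mid\FF_{\bar{\mathbf t}}).$$
The right-hand side is a sum of $D$ eigenvalues, so it suffices to show each eigenvalue has absolute value at most $q^{(n-k)/2}$.

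Second, bound the weights. Each factor $\iota_{i!}(\LL_\psi\otimes\LL_{\chi_i})[1]$ is mixed of weights $\le 1$; in fact it is pure, since $\iota_i$ is finite, so $\iota_{i!}=\iota_{i*}$. Their external tensor product is mixed of weights $\le n$, and the compactly supported direct image $\R\mu_!$ does not increase weights by Weil II \cite[3.3.1]{deligne1980conjecture}. So $\HH_{A,\chi}$ is mixed of weights $\le n$. This means that for every $i$, the cohomology sheaf $\mathcal H^i(\HH_{A,\chi})$ is pointwise mixed of weights $\le n+i$. Taking $i=-k$ on $U$, $\FF=\mathcal H^{-k}$ is pointwise of weights $\le n-k$. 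Therefore every eigenvalue of $F_{\mathbf t}$ on $\FF_{\bar{\mathbf t}}$ has complex absolute value at most $q^{(n-k)/2}$, under any embedding $\Ql\hookrightarrow\mathbb C$. The triangle inequality then gives
$$|\mathrm{Hyp}_{A,\chi}(\mathbf t)|\le D\,q^{(n-k)/2}=d(A)_{\hat p}v(A)\,q^{\frac{n-k}{2}}.$$

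The only delicate point is the weight bookkeeping: the shift by $k$ turns the weight bound $n$ on the complex into $n-k$ on $\FF$. This is the standard convention that $K$ mixed of weights $\le w$ means $\mathcal H^iK$ has pointwise weights $\le w+i$. Everything else is immediate from Theorem \ref{main}.
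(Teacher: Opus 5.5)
Your proposal is correct and is essentially the paper's argument: the paper gives no separate proof, and the corollary comes from combining Theorem~\ref{main} with the weight reasoning sketched in the introduction. There, Weil~II makes $\HH_{A,\chi}$ mixed of weights $\leq n$, so the lisse sheaf $\FF$ with $\HH_{A,\chi}|_{U}=\FF[k]$ has pointwise weights $\leq n-k$, and the trace formula and the triangle inequality give the bound with $D=d(A)_{\hat p}v(A)$. The one point you leave implicit, that the shift is exactly $k$, holds because $\HH_{A,\chi}$ is perverse and $U$ is smooth of dimension $k$.
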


\begin{remark}Note that, even though the generic rank of $\HH_{A,\chi}$ is independent of $\chi$, its irreducibility and its actual weights may depend on it. This is already visible in dimension one: if one takes $A=(1\;-1)$ with $\chi_1\chi_2$ non-trivial, then $\HH_{A,\chi}$ is an irreducible perverse sheaf, pure of weight 2 (it is actually isomorphic to $\LL_{\chi_1(t)}\otimes\LL_{\overline{\chi_1\chi_2}(t+1)}\otimes g(\chi_1\chi_2)^{deg}[1]$, where $g(\chi_1\chi_2)$ denotes the Gauss sum). However, if $\chi_1\chi_2$ is trivial then $\HH_{A,\chi}$ is isomorphic to the direct sum of $\LL_{\chi_1}[1]$ (negligible of weight 1) and a weight 2 punctual sheaf supported at $-1$. In both cases, the generic rank is $1$, but the weights are different. In particular, theorem \ref{main} would be false in general if we used middle convolution (which ``discards'' the negligible components) instead of the compactly supported one.
\end{remark}

The main ingredient in the proof of theorem \ref{main} is the theory of $\ell$-adic GKZ hypergeometric sheaves developed by Fu in \cite{fu2016}. This is the $\ell$-adic counterpart of the GKZ hypergeometric differential systems defined by Gelfand, Kapranov and Zelevinsky \cite{gkz89} and parameterizes the twisted toric exponential sums previously studied by Adolphson and Sperber \cite{AS93} and Fu \cite{fu2009weights}. Let $B=(b_{ij})\in\mathcal M_{m,n}(\ZZ)$ be a matrix and $F:\Gm^m\times\Gm^n\to\AAA^1$ the map given by
$$
(t_1,\ldots,t_m;x_1,\ldots,x_n)\mapsto\sum_{i=1}^m t_ix_1^{b_{i1}}\cdots x_n^{b_{in}}.
$$
Fix also a multiplicative character $\chi:(\Fq^\times)^n\to\Ql^\times$. Then the GKZ hypergeometric sheaf is the object in $\Dbc(\Gm^m,\Ql)$ defined by
$$
{\GGG}_{B,\chi}=\R\pi_{1!}(\pi_2^*\LL_\chi\otimes F^*\LL_\psi)[m+n],
$$
where $\pi_1:\Gm^m\times\Gm^n\to\Gm^m$ and $\pi_2:\Gm^m\times\Gm^n\to\Gm^n$ are the projections and $\LL_\chi$ is the Kummer sheaf on $\Gm^n$ associated to $\chi$ (Note that in \cite{fu2016} the object is defined on the entire $\AAA^m$, while we only consider its restriction to $\Gm^m$). It is a perverse sheaf \cite[Theorem 0.3]{fu2016} with generic rank $n!\Vol(\Delta)$, where $\Delta\subseteq\RR^n$ is the convex hull of the set consisting of the rows of $B$ and the origin, provided that there is a dense open set of $(t_1,\ldots,t_m)$ in $\Gm^m$ such that the Laurent polynomial $F(t_1,\ldots,t_m;x_1,\ldots,x_n)\in\Fq[x_1^{\pm 1},\ldots,x_n^{\pm 1}]$ is non-degenerate with respect to $\Delta$ \cite[Theorem 0.4]{fu2016}.

In section \ref{vs} we will show that (Gabber-Loeser) hypergeometric sheaves on $\Gm^r$ can be constructed from GKZ hypergeometric sheaves using elementary sheaf operations (restriction and direct image by finite maps). This will allow us to deduce their rank from the rank of the corresponding GKZ sheaf, provided that \cite[Theorem 0.4]{fu2016} can be applied. In section \ref{sec-crit} we will prove that, for a given matrix $A$, the theorem can indeed be applied in all but finitely many characteristics, which can be explicitly determined by a simple criterion. In section \ref{examples} we work out some simple cases and, in particular, check that our results match the already known results in dimension one. Finally, in section \ref{lift} we give an analogue of \cite[Theorem 0.8]{fu2016} which states that, in certain cases, the hypergeometric sheaf $\HH_{A,\chi}$ can be lifted to an object in characteristic zero.

\section{GL hypergeometric vs GKZ hypergeometric sheaves}\label{vs}

In this section, we will give an expression of hypergeometric perverse sheaves on $\Gm^r$ (in the sense of Gabber and Loeser) in terms of GKZ hypergeometric sheaves, thus answering a question posed by Fu in the introduction of \cite{fu2016}. We start with some generalities about morphisms between tori.

Let $A\in\mathcal M_{r,n}(\ZZ)$ be a matrix. Then $A$ defines a morphism of tori $[A]:\Gm^n\to\Gm^r$ given by
$$
[A](s_1,\ldots,s_n)=\left(\prod_{j=1}^n s_j^{a_{1j}},\ldots,\prod_{j=1}^n s_j^{a_{rj}}\right).
$$
The following properties are easy to check:
\begin{enumerate}
 \item $[A+B]=[A]\cdot[B]$ for $A,B\in\mathcal M_{r,n}(\ZZ)$.
 \item $[BA]=[B]\circ[A]$ for $A\in\mathcal M_{r,n}(\ZZ)$, $B\in\mathcal M_{s,r}(\ZZ)$.
 \item $[A\sqcup B]=[A]\otimes[B]$ for $A\in\mathcal M_{r,n}(\ZZ)$, $B\in\mathcal M_{r,m}(\ZZ)$; where $A\sqcup B\in\mathcal M_{r,n+m}$ is the yuxtaposition of $A$ and $B$ and $[A]\otimes[B]:\Gm^{n+m}=\Gm^n\times\Gm^m\to\Gm^r$ is given by $([A]\otimes[B])(s,t)=[A](s)\cdot [B](t)$.
 \item The image of $[A]$ is a connected subtorus of $\Gm^r$ of dimension $\mathrm{rank}(A)$. In particular, $[A]$ is surjective if and only if $\mathrm{rank}(A)=r$.
 \item The kernel of $[A]$ is a subgroup of $\Gm^n$ of dimension $n-\mathrm{rank}(A)$. In particular, $[A]$ is a finite map if and only if $\mathrm{rank}(A)=n$.
 \item If $A\in\mathcal M_n(\ZZ)$ is invertible, then $[A]:\Gm^n\to\Gm^n$ is an automorphism with inverse $[A^{-1}]$.
 
\end{enumerate}

The following lemma gives an alternative description of the hypergeometric object $\HH_{A,\chi}$.

\begin{lemma} Let $[A]:\Gm^n\to\Gm^r$ be the torus homomorphism defined by $A$. Then 
$$\HH_{A,\chi}=\R A_!((\LL_\psi\otimes\LL_{\chi_1})\boxtimes\cdots\boxtimes(\LL_\psi\otimes\LL_{\chi_n}))[n]=
\R [A]_!(\LL_{\psi(x_1+\cdots+x_n)}\otimes\LL_\chi)[n].
$$
\end{lemma}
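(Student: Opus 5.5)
We argue by induction on $n$, using the definition of $\ast=\ast_!$ together with proper base change and the Künneth formula for compactly supported direct images. The second equality in the statement is immediate: on $\Gm^n$ we have $(\LL_\psi\otimes\LL_{\chi_1})\boxtimes\cdots\boxtimes(\LL_\psi\otimes\LL_{\chi_n})\cong\LL_{\psi(x_1+\cdots+x_n)}\otimes\LL_\chi$, because $\boxtimes_i\LL_\psi\cong\sigma^*\LL_\psi$ for the sum map $\sigma$ (the character sheaf property of $\LL_\psi$) and $\boxtimes_i\LL_{\chi_i}=\LL_\chi$ by definition. So only the first equality needs proof.

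For $n=1$ the matrix $A$ is the single column $\mathbf a_1$, and $[A]=\iota_1$, so the claim is the definition of $\HH_{A,\chi}$. For the inductive step, write $A=A'\sqcup\mathbf a_n$, where $A'$ consists of the first $n-1$ columns, and set $\chi'=\otimes_{i<n}\chi_i$ and $K_i=\LL_\psi\otimes\LL_{\chi_i}$. By the inductive hypothesis,
$$\HH_{A,\chi}=\HH_{A',\chi'}\ast\iota_{n!}K_n[1]=\R\mu_!\left(\R[A']_!(K_1\boxtimes\cdots\boxtimes K_{n-1})[n-1]\boxtimes\R\iota_{n!}K_n[1]\right).$$
The Künneth formula for $\R(-)_!$ gives $\R[A']_!(\cdot)\boxtimes\R\iota_{n!}(\cdot)\cong\R([A']\times\iota_n)_!(\cdot\boxtimes\cdot)$ on $\Gm^r\times\Gm^r$, with the shifts adding to $[n]$. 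Then $\R\mu_!\circ\R([A']\times\iota_n)_!=\R(\mu\circ([A']\times\iota_n))_!$. By property (3) of the list above, $\mu\circ([A']\times\iota_n)=[A']\otimes[\mathbf a_n]=[A'\sqcup\mathbf a_n]=[A]$. This gives the first equality.

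No step is a real obstacle. The only points to watch are that all objects carry their natural Weil/Frobenius structures, so the isomorphisms are compatible with Frobenius, and that the Künneth isomorphism holds over a finite field for compactly supported direct images (SGA 4½, [Th. finitude]), which is standard.
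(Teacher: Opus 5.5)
Your proposal is correct and follows essentially the same route as the paper: induction on $n$, the definition of $\ast=\ast_!$, the Künneth formula to merge $\R[A']_!(\cdot)\boxtimes\R\iota_{n!}(\cdot)$ into $\R([A']\times\iota_n)_!(\cdot\boxtimes\cdot)$, composition of compactly supported direct images, and property (3) to identify $\mu\circ([A']\times[\mathbf a_n])$ with $[A]$. Your explicit justification of the second equality, via the character-sheaf property of $\LL_\psi$, fills in a step the paper leaves implicit.
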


\begin{proof}
We proceed by induction on $n$, the case $n=1$ being clear by definition. Assuming the $n-1$ case is known, let $A'$ be the matrix formed by the first $n-1$ columns of $A$, and $\chi':=\otimes_{i=1}^{n-1}\chi_i:\Gm^{n-1}\to\Ql^\times$. Note that $\iota_n=[\mathbf a_n]$, where we view $\mathbf a_n$ as a column matrix. Then
$$
\HH_{A,\chi}=\HH_{A',\chi'}\ast\iota_{n!}(\LL_\psi\otimes\LL_{\chi_n})[1]=
$$
$$
=\R [A']_!(\LL_{\psi(x_1+\cdots+x_{n-1})}\otimes\LL_{\chi'})[n-1]\ast\iota_{n!}(\LL_\psi\otimes\LL_{\chi_n})[1]=
$$
$$
=\R \mu_!(\R [A']_!(\LL_{\psi(x_1+\cdots+x_{n-1})}\otimes\LL_{\chi'})\boxtimes\iota_{n!}(\LL_\psi\otimes\LL_{\chi_n}))[n]=
$$
$$
=\R \mu_!(\R([A']\times\iota_n)_!((\LL_{\psi(x_1+\cdots+x_{n-1})}\otimes\LL_{\chi'})\boxtimes(\LL_{\psi}\otimes\LL_{\chi_n})))[n]=
$$
$$
=\R(\mu\circ([A']\times[\mathbf a_n]))_!(\LL_{\psi(x_1+\cdots+x_n)}\otimes\LL_\chi)[n]=
$$
$$
=\R[A'\sqcup\mathbf a_n]_!(\LL_{\psi(x_1+\cdots+x_n)}\otimes\LL_\chi)[n]=
$$
$$
=\R[A]_!(\LL_{\psi(x_1+\cdots+x_n)}\otimes\LL_\chi)[n],
$$
where $\mu:\Gm^r\times\Gm^r\to\Gm^r$ denotes the multiplication map, using property (3) above.
\end{proof}

Let $QAP=D$ be a Smith normal form of $A$, with $Q\in\mathcal M_r(\ZZ),P\in\mathcal M_n(\ZZ)$ invertible and $D\in \mathcal M_{r,n}(\ZZ)$ diagonal. Then $[Q]:\Gm^r\to\Gm^r$ and $[P]:\Gm^n\to\Gm^n$ are automorphisms. Let $k$ be the rank of $A$, and $P_1,P_0$ the submatrices consisting of the first $k$ and last $n-k$ columns of $P$. The columns of $P_0$ form a basis for the (right) nullspace of $A$, $\nullsp(A)$. If $\mathbf d:=(d_1,\ldots,d_k)$ are the invariant factors of $A$ (that is, the non-zero diagonal elements of $D$), then $[D]=[\mathbf d]\pi_k$, where $\pi_k:\Gm^n\to\Gm^k$ is the projection onto the first $k$ coordinates and $[\mathbf d]:\Gm^k\to\Gm^r$ is the finite map $(t_1,\ldots,t_k)\mapsto(t_1^{d_1},\ldots,t_k^{d_k},1,\ldots,1)$ (which can be further decomposed into a purely inseparable map followed by a finite \'etale one and a closed immersion). Note that $[\mathbf d]$ is (geometrically) surjective if and only if $k=r$.

Denote by $p_{ij}$ the entries of the matrix $P$. For every $j=1,\ldots,n$, let $\eta_j=\prod_{i=1}^n\chi_i^{p_{ij}}$. Then we have
\begin{proposition} If $k=n$, the perverse sheaf $\HH_{A,\chi}$ on $\Gm^r$ is isomorphic to
$$[Q]^\ast[\mathbf d]_!\left(\LL_{\otimes_{j=1}^n\eta_j}\otimes[P]^\ast\LL_{\psi(x_1+\cdots+x_n)}\right)[n].$$
If $k<n$, it is isomorphic to 
 $$[Q]^\ast[\mathbf d]_!\left(\LL_{\otimes_{j=1}^k\eta_j}\otimes [P_1]^\ast\GGG_{P_0,\eta}\right)[k-n],$$
 where $\eta=\otimes_{j=k+1}^n\eta_j$.
\end{proposition}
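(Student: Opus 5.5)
The plan is to start from the lemma just proved, $\HH_{A,\chi}=\R[A]_!(\LL_{\psi(x_1+\cdots+x_n)}\otimes\LL_\chi)[n]$, and factor $[A]$ through the Smith normal form. Since $QAP=D$, we have $A=Q^{-1}DP^{-1}$, so by property (2) $[A]=[Q^{-1}]\circ[D]\circ[P^{-1}]$.

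\textbf{Step 1: absorb the automorphisms.} For the automorphisms, property (6) gives $\R[P^{-1}]_!=[P]^\ast$ and $\R[Q^{-1}]_!=[Q]^\ast$. Hence
$$\HH_{A,\chi}=[Q]^\ast\,\R[D]_!\bigl([P]^\ast\LL_\chi\otimes[P]^\ast\LL_{\psi(x_1+\cdots+x_n)}\bigr)[n].$$
Next I identify $[P]^\ast\LL_\chi$. At the level of functions,
$$\chi([P](s))=\prod_i\chi_i\Bigl(\prod_j s_j^{p_{ij}}\Bigr)=\prod_j\prod_i\chi_i^{p_{ij}}(s_j)=\prod_j\eta_j(s_j).$$
At the level of sheaves this follows because Kummer sheaves are multiplicative: $\LL_{\chi_i}(ab)=\LL_{\chi_i}(a)\otimes\LL_{\chi_i}(b)$ and $\LL_{\chi_i}(a^m)=\LL_{\chi_i^m}(a)$. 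Therefore $[P]^\ast\LL_\chi\cong\LL_{\otimes_j\eta_j}$.

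\textbf{Step 2: the case $k=n$.} Here $[D]=[\mathbf d]$ is finite, so $\R[\mathbf d]_!=[\mathbf d]_!$, and the first formula follows immediately.

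\textbf{Step 3: the case $k<n$.} Now $[D]=[\mathbf d]\circ\pi_k$, so it remains to compute $\R\pi_{k!}$. Write $s=(u,v)$ with $u\in\Gm^k$ and $v\in\Gm^{n-k}$. Then $[P](u,v)=[P_1](u)\cdot[P_0](v)$, and the $i$-th coordinate sum becomes
$$\sum_i x_i=\sum_{i=1}^n\Bigl(\prod_{j\le k}u_j^{p_{ij}}\Bigr)\Bigl(\prod_{j>k}v_j^{p_{ij}}\Bigr)=F(t;v)\quad\text{with } t=[P_1](u),$$
where $F$ is the GKZ phase attached to $B=P_0$, whose rows are the exponent vectors in $\ZZ^{n-k}$. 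Also $\LL_{\otimes_j\eta_j}=\LL_{\otimes_{j\le k}\eta_j}(u)\otimes\LL_\eta(v)$. So the sheaf on $\Gm^k\times\Gm^{n-k}$ is $\LL_{\otimes_{j\le k}\eta_j}\boxtimes\mathcal 1$ tensored with the pullback under $[P_1]\times\mathrm{id}:\Gm^k\times\Gm^{n-k}\to\Gm^n\times\Gm^{n-k}$ of $\pi_2^\ast\LL_\eta\otimes F^\ast\LL_\psi$.

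I then apply the projection formula to pull the $u$-factor out. Proper base change along the cartesian square formed by $[P_1]\times\mathrm{id}$ and the projections gives
$$\R\pi_{k!}(\cdots)=\LL_{\otimes_{j\le k}\eta_j}\otimes[P_1]^\ast\R\pi_{1!}(\pi_2^\ast\LL_\eta\otimes F^\ast\LL_\psi)=\LL_{\otimes_{j\le k}\eta_j}\otimes[P_1]^\ast\GGG_{P_0,\eta}[-(2n-k)].$$
Combining this shift with the $[n]$ gives $[k-n]$. Finally, $[\mathbf d]$ is finite, so $\R[\mathbf d]_!=[\mathbf d]_!$, and this yields the second formula.

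The main point requiring care is Step 3: one must correctly match $B=P_0$ and $m=n$ in Fu's definition, and get the shift $[m+n']=[n+(n-k)]$ right. The rest is formal functoriality.
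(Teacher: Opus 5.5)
Your proposal is correct and follows essentially the same route as the paper. Like the paper, you factor $[A]$ through the Smith normal form, turn the automorphisms into the pullbacks $[Q]^\ast$ and $[P]^\ast$, identify $[P]^\ast\LL_\chi\cong\LL_{\otimes_j\eta_j}$, pull out the $\Gm^k$-factor with the projection formula, and obtain $[P_1]^\ast\GGG_{P_0,\eta}$ by proper base change along $[P_1]\times\mathrm{id}$, with the same shift bookkeeping ($n-(2n-k)=k-n$, taking $m=n$ and $B=P_0$ in Fu's definition).
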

\begin{proof}
We have
$$
\mathcal H_{A,\chi}=\R[A]_!(\LL_{\psi(x_1+\cdots+x_n)}\otimes\LL_\chi)[n]=
$$
$$
=\R[Q^{-1}QAPP^{-1}]_!(\LL_{\psi(x_1+\cdots+x_n)}\otimes\LL_\chi)[n]=
$$
$$
=[Q]^\ast\R[D]_![P]^\ast(\LL_{\psi(x_1+\cdots+x_n)}\otimes\LL_\chi)[n]=
$$
$$
=[Q]^\ast[\mathbf d]_!\R\pi_{k!}[P]^\ast(\LL_{\psi(x_1+\cdots+x_n)}\otimes\LL_\chi)[n]=
$$
$$
=[Q]^\ast[\mathbf d]_!\R\pi_{k!}(\LL_{\psi(\sum_i x_1^{p_{i1}}\cdots x_n^{p_{in}})}\otimes\LL_{\chi([P](x))})[n]=
$$
$$
=[Q]^\ast[\mathbf d]_!\R\pi_{k!}(\LL_{\psi(\sum_i x_1^{p_{i1}}\cdots x_n^{p_{in}})}\otimes\LL_{\otimes_{j=1}^n\eta_j})[n]=
$$
$$
=[Q]^\ast[\mathbf d]_!\left(\LL_{\otimes_{j=1}^k\eta_j}\otimes\R\pi_{k!}(\LL_{\psi(\sum_i x_1^{p_{i1}}\cdots x_n^{p_{in}})}\otimes\LL_{\eta})\right)[n]
$$
by the projection formula, since $[\mathbf d]$ is a finite map and $[P]$ and $[Q]$ are automorphisms. If $k=n$, then $\pi_k$ is the identity, $\eta$ is trivial and we are done. Otherwise, if $[P_1]:\Gm^k\to\Gm^n$ is the torus homomorphism defined by $P_1$, then
$$
\LL_{\psi(\sum_i x_1^{p_{i1}}\cdots x_n^{p_{in}})}\otimes\LL_\eta\cong
$$
$$
\cong([P_1]\times Id)^\ast \LL_{\psi(\sum_i y_i\prod_{j=k+1}^n x_j^{p_ {ij}})}\otimes\LL_{\eta}
$$
where $[P_1]\times Id:\Gm^n=\Gm^k\times\Gm^{n-k}\to\Gm^n\times\Gm^{n-k}$. By proper base change, we then have
$$
\R\pi_{k!}(\LL_{\psi(\sum_i x_1^{p_{i1}}\cdots x_n^{p_{in}})}\otimes\LL_{\eta})[n]\cong
$$
$$
\cong[P_1]^\ast\R\pi_{n!}(\LL_{\psi(\sum_i y_i\prod_{j=k+1}^n x_j^{p_ {ij}})}\otimes\LL_{\eta})[n]=
$$
$$
=[P_1]^\ast\GGG_{P_0,\eta}[k-n],
$$
where $\pi_n:\Gm^n\times\Gm^{n-k}\to\Gm^n$ is the projection onto the first factor (whose coordinates are denoted by $y_1,\ldots,y_n$).
\end{proof}

Note that $[P_1]$ is a closed immersion: more precisely, it is the restriction of the automorphism $[P]$ of $\Gm^n$ to the subtorus given by $x_{k+1}=\ldots=x_n=1$, which can be identified with $\Gm^k$.

Let $\Delta$ be the convex hull in $\RR^{n-k}$ of the rows of $P_0$ plus the origin. Denote by $\sigma_i(\mathbf s)=\sum_{j=1}^k s_j^{p_{ij}}$ the $i$-th coordinate of $[P_1](\mathbf s)$. Recall that a Laurent polynomial $f_{\mathbf a}(\mathbf x)=\sum_{i=1}^n a_i\prod_{j=k+1}^n x_j^{p_{ij}}\in\Fq[x_{k+1}^{\pm 1},\ldots,x_n^{\pm 1}]$ is said to be \emph{non-degenerate} with respect to $\Delta$ if, for every face $\tau$ of $\Delta$ that does not contain the origin, the partial derivatives of the polynomial $f_{\mathbf a,\tau}(\mathbf x):=\sum_{i\in I_\tau} a_i\prod_{j=k+1}^n x_j^{p_{ij}}$ do not vanish simultaneously at any non-zero $x_{k+1},\ldots,x_n$, where $I_\tau\subseteq\{1,\ldots,n\}$ is the subset of $i$ such that the $i$-th row of $P_0$ is in $\tau$.

Then \cite[Theorem 0.4]{fu2016} immediately implies:

\begin{theorem}\label{rank}
 Suppose that either $k=n$ (in which case we take $V=\Gm^n$), or there is a non-empty open subset $U$ of $\mathbf a=(a_1,\ldots,a_n)\in \Gm^n$ such that the Laurent polynomial $f_{\mathbf a}(\mathbf x)$ is non-degenerate with respect to $\Delta$, and we take $V=[P_1]^{-1}(U)$. Then $\HH_{A,\chi}$ is a shifted lisse sheaf of rank $d(A)_{\hat p}\cdot v(A)$ on $[Q]^{-1}[\mathbf d](V)$. 
\end{theorem}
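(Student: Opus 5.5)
The plan is to start from the description of $\HH_{A,\chi}$ in the Proposition and follow the generic stalk rank through each operation in turn.

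\textbf{Case $k=n$.} The object inside is $\LL_{\otimes\eta_j}\otimes[P]^\ast\LL_{\psi(x_1+\cdots+x_n)}[n]$. This is a lisse rank one sheaf on all of $\Gm^n$, shifted by $n$, and here $v(A)=1$. So it remains to understand $[\mathbf d]_!$ and $[Q]^\ast$.

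\textbf{Case $k<n$.} By \cite[Theorem 0.4]{fu2016}, applied to $B=P_0$ and the character $\eta$, the hypothesis says that $\GGG_{P_0,\eta}$ restricted to $U$ is $\FF[n]$. Here $\FF$ is lisse of rank $(n-k)!\Vol(\Delta)=v(A)$, and the shift is $n$ because $m=n$ is the dimension of the base. Pulling back by the closed immersion $[P_1]$ and applying the shift $[k-n]$ gives a sheaf on $V=[P_1]^{-1}(U)\subseteq\Gm^k$. It is $[P_1]^\ast\FF[k]$, which is lisse of rank $v(A)$, and twisting by the rank one lisse sheaf $\LL_{\otimes_{j\le k}\eta_j}$ does not change the rank. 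One subtlety: $V$ must be nonempty for the statement to be meaningful, but the lisse property on $V$ follows formally either way.

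In both cases we are reduced to the following claim: if $\GGG=\FF[k]$ with $\FF$ lisse of rank $R$ on an open set $V\subseteq\Gm^k$, then $[\mathbf d]_!\GGG$ is, on $[\mathbf d](V)$, a shifted lisse sheaf of rank $d(A)_{\hat p}R$. Then $[Q]^\ast$, being an automorphism, carries this to $[Q]^{-1}[\mathbf d](V)$ with the same rank. To prove the claim, I would factor $[\mathbf d]$ as
\begin{itemize}
\item the purely inseparable map $t_i\mapsto t_i^{(d_i)_p}$, which is a universal homeomorphism, so $!$-pushforward preserves lisseness and rank (it is an equivalence of étale sites);
\item the finite étale Kummer isogeny $t_i\mapsto t_i^{(d_i)_{\hat p}}$ of $\Gm^k$, which is Galois of degree $\prod_i(d_i)_{\hat p}=d(A)_{\hat p}$;
\item the closed immersion $\Gm^k\hookrightarrow\Gm^r$ onto the subtorus where the last $r-k$ coordinates equal $1$, which does not change stalks.
\end{itemize}

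The main obstacle is the étale step. The image of $V$ under the isogeny $\phi$ is open, but $\phi^{-1}(\phi(V))$ may be strictly larger than $V$. At a point of $\phi(V)$ whose fibre is not contained in $V$, the stalk of $\phi_!(\FF|_V)$ would have smaller rank. I would handle this by shrinking to $V'=\bigcap_{g\in\ker\phi}gV$, on which $\phi_!$ is clearly lisse of rank $d(A)_{\hat p}R$.

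Better, I would show $V$ is already $\ker\phi$-stable. The GKZ side is invariant under the relevant torus action: translating $\mathbf s$ by a root of unity $\zeta$ in $\ker[\mathbf d]$ multiplies the coefficients $a_i=\sigma_i(\mathbf s)$ in a way that can be absorbed by rescaling $\mathbf x$. So the non-degeneracy locus $U$, and hence $V$, can be taken stable; if necessary one replaces $U$ by the intersection of its finitely many translates. With this, the stalk of $[\mathbf d]_!$ at each point of $[\mathbf d](V)$ is the direct sum over the full fibre of $d(A)_{\hat p}$ points, each contributing rank $R$. This gives total rank $d(A)_{\hat p}\,v(A)$.
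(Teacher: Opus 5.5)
Your overall route is the paper's: start from the Proposition, then track the rank through $[P_1]^\ast$, the twist, the three factors of $[\mathbf d]$, and $[Q]^\ast$. However, the key input is asserted rather than proved. You read off from Fu's Theorem 0.4 that $\GGG_{P_0,\eta}|_U\cong\FF[n]$ with $\FF$ lisse of rank $v(A)$ on \emph{all} of $U$. What that theorem provides, and all the paper takes from it, is the generic rank, i.e.\ lisseness on some unspecified dense open subset of $\Gm^n$. When $k<n$, $[P_1](\Gm^k)$ is a proper closed subtorus, which such an open set may miss entirely, so generic information is useless after pulling back by $[P_1]$. The paper obtains the pointwise statement in two steps. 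First, by proper base change the stalk at $\mathbf a$ is, up to shift, $\R\Gamma_c(\Gm^{n-k},\LL_\eta\otimes\LL_{\psi(f_{\mathbf a})})$; by Fu's 2009 result (Proposition 0.1) this is concentrated in one degree with dimension $(n-k)!\Vol(\Delta)$ whenever $f_{\mathbf a}$ is non-degenerate. Second, since constant stalk rank does not by itself give lisseness, it uses Katz's semicontinuity result (Proposition 11): a sheaf of perverse origin with constant rank is lisse. Your proposal skips both steps.

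On the étale step your worry is justified, and the paper's proof passes over it. But your ``better'' argument fails. Translating $\mathbf s$ by $\zeta\in\ker[\mathbf d]$ multiplies $\mathbf a=[P_1](\mathbf s)$ by $[P_1](\zeta)\in\ker[A]$. The substitution $\mathbf x\mapsto\mathbf v\mathbf x$ only absorbs multiplication by $[P_0](\mathbf v)$, and $[P_1](\zeta)$ is in general not of that form. Take $A=(2\;\;-2)$ with $p$ odd, $Q=(1)$, $P=\begin{pmatrix}1&1\\0&1\end{pmatrix}$. Then $f_{\mathbf a}=(a_1+a_2)x$, $U=\{a_1+a_2\neq 0\}$, $[P_1](s)=(s,1)$, $V=\Gm\setminus\{-1\}$ and $[\mathbf d](s)=s^2$, so $[\mathbf d](V)=\Gm$. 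However, the fibre of $[A]$ over $1$ is $\{x_1=x_2\}\cup\{x_1=-x_2\}$, and on the second line the additive character is trivial. That line therefore contributes $\R\Gamma_c(\Gm,\LL_{\chi_1\chi_2})$, which is zero or rank one in two degrees, instead of a rank one term in degree $-1$. So $\HH_{A,\chi}$ is not lisse of rank $2$ at $1$, and the statement as literally written is false: shrinking is unavoidable. The correct conclusion is what your first fallback yields: lisseness of rank $d(A)_{\hat p}v(A)$ on $[Q]^{-1}[\mathbf d](V')$ with $V'=\bigcap_{\zeta\in\ker[\mathbf d]}\zeta V$, i.e.\ on the complement of $[Q]^{-1}[\mathbf d](\Gm^k\setminus V)$ in $[A](\Gm^n)$. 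Replacing $U$ by the intersection of its translates does not rescue the original claim, since it changes $V$. For $A=(N\;\;-N)$ your $V'$ gives exactly $\Gm\setminus\{(-1)^N\}$. By contrast, the set $\{t^{2N}:t^2\neq-1\}$ computed in the paper's examples is all of $\Gm$ once $N\geq 2$.
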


\begin{proof}
 If $k<n$, by \cite[Theorem 0.4]{fu2016}, $\GGG_{P_0,\eta}$ is a perverse sheaf of generic rank $v(A)$. By \cite[Proposition 0.1]{fu2009weights} it is actually a shifted sheaf of rank $v(A)$ on $U$. In the terminology of \cite{katz2003semicontinuity}, this sheaf is ``of perverse origin''. By \cite[Proposition 11]{katz2003semicontinuity} this implies that it is lisse, being of constant rank. Therefore, so is $[P_1]^\ast\GGG_{P_0,\eta}$ on $V=[P_1]^{-1}(U)$.
 
Tensoring with $\LL_{\otimes_{j=1}^k\eta_j}$, which is lisse of rank one, does not modify the rank or the property of being lisse. Let $d_i=(d_i)_p(d_i)_{\hat p}$ for $i=1,\ldots,k$. The finite map $[\mathbf d]$ is the composition of a purely inseparable map $(t_1,\ldots,t_k)\mapsto(t_1^{(d_1)_p},\ldots,t_k^{(d_k)_p})$ (which does not change the rank), a finite \'etale map $(t_1,\ldots,t_k)\mapsto(t_1^{(d_1)_{\hat p}},\ldots,t_k^{(d_k)_{\hat p}})$ of degree $d(A)_{\hat p}=(d_1)_{\hat p}\cdots (d_k)_{\hat p}$ (which multiplies the rank by $d(A)_{\hat p}$) and a closed immersion $\Gm^k\to\Gm^r$. Finally, $[Q]$ is an automorphism, and so $[Q]^\ast$ preserves lisseness and rank.

In the $k=n$ case we apply the same argument with $[P]^\ast\LL_{\psi(x_1+\cdots+x_n)}$ instead of $[P_1]^\ast\GGG_{P_0,\eta}$, which is everywhere lisse of rank one.
\end{proof}

By \cite[p.59]{fu2016}, given $A$, for all but a finite set of primes there is such a non-empty open set $U$. However, this does not directly imply that $V=[P_1]^{-1}(U)$ is non-empty, since the image of $[P_1]$ is a proper closed subset of $\Gm^n$. In the next section, we will show that for a fixed $A$ there is indeed an explicit finite set of primes $S$ such that $V$ is non-empty if $p\not\in S$.

\section{Criteria for non-degeneracy}\label{sec-crit}

In this section we will give an explicit sufficient criterion for the non-degeneracy hypothesis of Theorem \ref{rank} to hold. For any subset $I\subseteq\{1,\ldots,n\}$, let $P_I$ (respectively $P_{0,I}$) denote the submatrix of $P$ (resp. of $P_0$) consisting of the rows of $P$ (resp. of $P_0$) whose index is in $I$. Since $P$ is invertible over $\ZZ$, $P_I$ has rank $|I|$ for any $I$, and so does its reduction modulo $p$. If $\tau$ is a face of $\Delta$ and $I_\tau\subseteq\{1,\ldots,n\}$ are the indices for the rows of $P_0$ contained in $\tau$, we write $P_{\tau}$ and $P_{0,\tau}$ for $P_{I_\tau}$ and $P_{0,I_\tau}$ respectively. Note that $P_{0,\tau}$ has rank $\dim(\tau)+1$ (the dimension of the subspace of $\RR^{n-k}$ spanned by $\tau$) if $\tau$ does not contain the origin.

Consider the following condition $(C_p)$: for every face $\tau$ of $\Delta$ that does not contain the origin, the rank of the reduction modulo $p$ of $P_{0,{\tau}}$ is also $\dim(\tau)+1$. This condition is preserved by right multiplication of $P_0$ by an invertible matrix, and so it depends only on $A$, not on the specific chosen basis for its nullspace.

If $k_\tau=\dim(\tau)+1$, then the reduction modulo $p$ of $P_{0,\tau}$ has rank $k_\tau$ if and only if not all $k_\tau\times k_\tau$ minors of $P_{0,\tau}$ are multiples of $p$. In particular, $P_0$ satisfies $(C_p)$ for all but finitely many primes $p$ (namely, for all but those who divide the gcd of the $k_\tau\times k_\tau$ minors of $P_{0,\tau}$ for some face $\tau$ of $\Delta$ not containing the origin). 

\begin{theorem}\label{non-deg} Suppose that $P_0$ satisfies condition $(C_p)$. Then there exists a dense open subset $U\subseteq\Gm^n$ such that
\begin{enumerate}
 \item The Laurent polynomial $f_{\mathbf a}(\mathbf x)=\sum_{i=1}^n a_i \prod_{j=k+1}^n x_j^{p_ {ij}}$ is non-degenerate with respect to $\Delta$ for every $\mathbf a=(a_1,\ldots,a_n)\in U(\overline{\Fq})$.
 \item $V:=[P_1]^{-1}(U)$ is non-empty (so a dense open subset of $\Gm^k$).
 \end{enumerate}
\end{theorem}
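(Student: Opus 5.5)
The plan is to describe, for each face $\tau$ of $\Delta$ not containing the origin, the locus $D_\tau\subseteq\Gm^{I_\tau}$ of coefficient vectors $(a_i)_{i\in I_\tau}$ for which $f_{\mathbf a,\tau}$ is degenerate. I will show that, under $(C_p)$, its Zariski closure $\overline{D_\tau}$ is a proper closed subset. It is also stable under a suitable torus action. I then take $U$ to be the complement in $\Gm^n$ of the union of the pull-backs of the $\overline{D_\tau}$ under the coordinate projections $\Gm^n\to\Gm^{I_\tau}$. Both assertions then reduce to statements about $\overline{D_\tau}$: part (1) is immediate, and part (2) follows from the torus invariance.

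\textbf{Step 1: describing $D_\tau$.} Write $\mathbf p_i\in\ZZ^{n-k}$ for the $i$-th row of $P_0$, and $\mathbf x^{\mathbf p_i}=\prod_{j=k+1}^n x_j^{p_{ij}}$. Vanishing of all partials of $f_{\mathbf a,\tau}$ at $\mathbf x\in\Gm^{n-k}$ is equivalent to vanishing of all $x_j\partial_j f_{\mathbf a,\tau}=\sum_{i\in I_\tau}p_{ij}a_i\mathbf x^{\mathbf p_i}$. Put $u_i:=a_i\mathbf x^{\mathbf p_i}$. Then the condition says exactly that $P_{0,\tau}^T\mathbf u=0$ in $\overline{\Fq}^{I_\tau}$.

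Let $K_\tau\subseteq\overline{\Fq}^{I_\tau}$ be this kernel. By $(C_p)$ it is a linear subspace of dimension $|I_\tau|-k_\tau$, where $k_\tau=\dim\tau+1$. Let $T_\tau\subseteq\Gm^{I_\tau}$ be the image of the homomorphism $[P_{0,\tau}]:\Gm^{n-k}\to\Gm^{I_\tau}$, $\mathbf x\mapsto(\mathbf x^{\mathbf p_i})_i$. This is a subtorus whose dimension is the rank of $P_{0,\tau}$, namely $k_\tau$. With this notation,
$$D_\tau=(K_\tau\cap\Gm^{I_\tau})\cdot T_\tau,$$
which is constructible by Chevalley's theorem.

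\textbf{Step 2: $\dim D_\tau<|I_\tau|$.} A naive count only gives $\dim D_\tau\le(|I_\tau|-k_\tau)+k_\tau=|I_\tau|$, so some overlap between $K_\tau$ and $T_\tau$ is needed. The overlap comes from the fact that $\tau$ lies in an affine hyperplane not through the origin. Hence there are an integer vector $\mathbf w$ and a nonzero integer $m$ with $P_{0,\tau}\mathbf w=m(1,\ldots,1)^T$. The cocharacter $c\mapsto c^{\mathbf w}$ of $\Gm^{n-k}$ is mapped by $[P_{0,\tau}]$ onto $c\mapsto(c^m,\ldots,c^m)$. Since $c\mapsto c^m$ is surjective on $\Gm$ even when $p\mid m$, $T_\tau$ contains the diagonal scalars. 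The cone $K_\tau\cap\Gm^{I_\tau}$ is stable under these scalars. Therefore $D_\tau$ is the image of $(K_\tau\cap\Gm^{I_\tau})\times T_\tau$ under a map whose fibres have dimension at least $1$, and so
$$\dim D_\tau\le |I_\tau|-k_\tau+k_\tau-1<|I_\tau|.$$
Thus $\overline{D_\tau}$ is proper, which proves (1) once $U$ is defined as above. I expect this step to be the main obstacle, specifically making sure the scaling argument survives reduction mod $p$. The surjectivity of $c\mapsto c^m$ is what handles this.

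\textbf{Step 3: nonemptiness of $V$.} By construction $D_\tau$, and hence $\overline{D_\tau}$, is stable under multiplication by $T_\tau$. Let $\rho_\tau:\Gm^n\to\Gm^{I_\tau}$ be the coordinate projection, and note that $\rho_\tau\circ[P_1]=[P_{1,\tau}]$, where $P_{1,\tau}$ denotes the rows of $P_1$ indexed by $I_\tau$. Suppose $[P_{1,\tau}](\Gm^k)\subseteq\overline{D_\tau}$. Invariance would then give
$$\overline{D_\tau}\supseteq[P_{1,\tau}](\Gm^k)\cdot[P_{0,\tau}](\Gm^{n-k})=[P_\tau](\Gm^n).$$
Since $P_\tau$ has rank $|I_\tau|$ mod $p$ (as observed before the theorem), the right-hand side is all of $\Gm^{I_\tau}$, a contradiction. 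Hence each $(\rho_\tau\circ[P_1])^{-1}(\overline{D_\tau})$ is a proper closed subset of the irreducible $\Gm^k$. A finite union of such subsets is still proper, so $V=[P_1]^{-1}(U)$ is a nonempty, hence dense, open subset of $\Gm^k$, which proves (2).
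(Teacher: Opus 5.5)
Your proof is correct and is essentially the paper's argument in coordinate-free form. The paper also gets the dimension drop from the one-dimensional scaling symmetry forced by the hyperplane not through the origin: its homogeneity coming from $c_i=\sum_{j\in J}c_{ij}$ is your observation that $T_\tau$ contains the diagonal. Its surjectivity of $\phi\circ[P_1]$ is your $T_\tau$-invariance combined with $[P_{1,\tau}](\Gm^k)\cdot T_\tau=\Gm^{I_\tau}$. The only difference is that the paper picks a maximal mod-$p$ independent set $J\subseteq I_\tau$ and integer relations $c_i\mathbf p_{0,i}=\sum_{j\in J}c_{ij}\mathbf p_{0,j}$ to write explicit equations on (an isogenous copy of) $\Gm^{I_\tau}/T_\tau$. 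You instead work directly with $K_\tau$ and $T_\tau$, which avoids the bookkeeping needed to make each $c_i$ prime to $p$.
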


\begin{proof} The statement can be proved separately for each of the finitely many faces of $\Delta$. So let $\tau$ be one such face that does not contain the origin, $P_{\tau},P_{0,\tau}$ the corresponding submatrices of $P,P_0$, and let $k_\tau=\dim(\tau)+1$ be the rank of $P_{0,\tau}$ (and of its reduction modulo $p$, by condition $(C_p)$). Let $J\subset I_\tau$ be a maximal subset such that the rows $\mathbf p_{0,j}$ of $P_0$ for $j\in J$ are linearly independent modulo $p$ (and therefore over $\mathbb Z$ too). 

For every $i\in I_\tau\backslash J$, there exist $c_i,c_{ij}\in\ZZ$ for $j\in J$ with $c_i\neq 0$ such that $c_i \mathbf{p}_{0,i}=\sum_{j\in J} c_{ij}\mathbf{p}_{0,j}$. Since all rows of $P_{0,\tau}$ are contained in some hyperplane not containing the origin, evaluating the equation of the hyperplane on this equality of vectors gives $c_i=\sum_{j\in J} c_{ij}$. We may assume that $c_i$ is prime to $p$: otherwise, reducing modulo $p$ would give $\mathbf 0=\sum_{j\in J}\bar c_{ij}\bar{\mathbf p}_{0,j}$ which, by the linear independence of the $\bar{\mathbf p}_{0,j}$, implies that all $c_{ij}$ are divisible by $p$, and we may divide the entire linear combination by $p$.

We need to show that there is a dense open subset $U$ of $\mathbf a\in\Gm^n$ such that the $n-k$ partial derivatives of $f_{\mathbf a,\tau}(\mathbf x)=\sum_{i\in I_\tau} a_i \prod_{j=k+1}^n x_j^{p_ {ij}}$ do not vanish simultaneously on any $(x_{k+1},\ldots,x_n)\in(\overline{\Fq}^\times)^{n-k}$. For $l=k+1,\ldots,n$, we have
\begin{equation}\label{partial}
x_l\frac{\partial f_{\mathbf a,\tau}}{\partial {x_l}}(\mathbf x)=\sum_{i\in I_\tau}a_i \bar p_{il}\prod_{j=k+1}^n x_j^{p_ {ij}}
\end{equation}
Let $y_i=\prod_{j=k+1}^n x_j^{p_{ij}}$ for $i\in J$ and $z_i=\prod_{j=k+1}^n x_j^{p_{ij}}$ for $i\in I_\tau\backslash J$. Then $z_i^{c_i}=\prod_{j\in J} y_j^{c_{ij}}$ for every $i\in I_\tau\backslash J$. If the equations (\ref{partial}) vanish simultaneously for $l=k+1,\ldots,n$ at some non-zero $(x_{k+1},\ldots,x_n)$, then the system of equations
\begin{equation}\label{eq2}
\left\{\begin{array}{ll}
        \sum_{j\in J}a_j\bar p_{jl}y_j+\sum_{i\in I_\tau\backslash J}a_i\bar p_{il}z_i=0 & (l=k+1,\ldots,n) \\
        z_i^{c_i}=\prod_{j\in J} y_j^{c_{ij}} & (i\in I_\tau\backslash J)
       \end{array}
\right.
\end{equation}
has a non-zero solution on $\{y_j,z_i\}$.

Since $\bar p_{il}=\sum_{j\in J}\bar c_i^{-1}\bar c_{ij}\bar p_{jl}$ for every $l$ and $i\in I_\tau\backslash J$, the first equations become
$$\sum_{j\in J}a_j\bar p_{jl}y_j+\sum_{i\in I_\tau\backslash J}a_i\sum_{j\in J}\bar c_i^{-1}\bar c_{ij}\bar p_{jl}z_i=
\sum_{j\in J}\bar p_{jl}\left(a_jy_j+\sum_{i\in I_\tau\backslash J}a_i\bar c_i^{-1}\bar c_{ij}z_i\right)=0
$$
for $l=k+1,\ldots,n$. Given that the rows $\mathbf p_{0,j}$ for $j\in J$ are linearly independent modulo $p$, this implies
$$
a_jy_j+\sum_{i\in I_\tau\backslash J}a_i\bar c_i^{-1}\bar c_{ij}z_i=0
$$
for every $j\in J$. So the system
 \begin{equation}\label{eq3}
\left\{\begin{array}{ll}
        a_jy_j+\sum_{i\in I_\tau\backslash J}a_i\bar c_i^{-1}\bar c_{ij}z_i=0 & (j\in J) \\
        z_i^{c_i}=\prod_{j\in J} y_j^{c_{ij}} & (i\in I_\tau\backslash J)
       \end{array}
\right.
\end{equation}
has a non-zero solution. Via the change of variables $y_j\mapsto a_j^{-1}y_j$, $z_i\mapsto a_i^{-1}z_i$, this becomes
\begin{equation}\label{eq4}
\left\{\begin{array}{ll}
        y_j+\sum_{i\in I_\tau\backslash J}\bar c_i^{-1}\bar c_{ij}z_i=0 & (j\in J) \\
        z_i^{c_i}=a_i^{c_i}\prod_{j\in J}a_j^{-c_{ij}} y_j^{c_{ij}} & (i\in I_\tau\backslash J)
       \end{array}
\right.
\end{equation}

We need to prove that there is a dense open set of $\mathbf a\in\Gm^n$ such that this system has no non-zero solutions. Let $Z\subseteq \Gm^{k_\tau}\times\Gm^{|I_\tau|-k_\tau}\times\Gm^{|I_\tau|-k_\tau}$ (with coordinates $y_j,z_i,t_i$) be the subscheme defined by the equations
\begin{equation}\label{eq5}
\left\{\begin{array}{ll}
        y_j+\sum_{i\in I_\tau\backslash J}c_i^{-1}c_{ij}z_i=0 & (j\in J) \\
        z_i^{c_i}=t_i\prod_{j\in J} y_j^{c_{ij}} & (i\in I_\tau\backslash J)
       \end{array}
\right.
\end{equation}

Then $\dim(Z)=|I_\tau|-k_\tau$ (in fact, $Z$ is isomorphic to an open subset of $\Gm^{|I_\tau|-k_\tau}$ via the projection onto the $z_i$'s). Let $\pi:Z\to\Gm^{|I_\tau|-k_\tau}$ be the projection onto the $t_i$'s. The fibres of $\pi$ are at least one-dimensional: every scalar multiple of a solution of (\ref{eq5}) is again a solution, since $c_i=\sum_{j\in J}c_{ij}$ for every $i\in I_\tau\backslash J$ so the equations (\ref{eq5}) are homogeneous in $\{z_i,y_j\}$. So the dimension of $\pi(Z)$ is at most $|I_\tau|-k_\tau-1$, and in particular there is a dense open set $W=\Gm^{|I_\tau|-k_\tau}\backslash\overline{\pi(Z)}^{Zar}$ of $\mathbf t\in\Gm^{|I_\tau|-k_\tau}$ not in $\pi(Z)$ (that is, such that (\ref{eq5}) has no solutions). We can then take $U=\phi^{-1}(W)$, where $\phi:\Gm^n\to\Gm^{|I_\tau|-k_\tau}$ is the map given by $(a_1,\ldots,a_n)\mapsto(a_i^{c_i}\prod_{j\in J}a_j^{-c_{ij}})_{i\in I_\tau\backslash J}$, which is clearly surjective. 

It remains to show that $[P_1]^{-1}(U)=(\phi[P_1])^{-1}(W)$ is non-empty. We will do that by showing that $\varphi:=\phi[P_1]:\Gm^k\to\Gm^{|I_\tau|- k_\tau}$ is surjective. Note that
$$\varphi(\mathbf s)=\left(\sigma_i(\mathbf s)^{c_i}\prod_{j\in J} \sigma_j(\mathbf s)^{-c_{ij}}\right)_{i\in I_\tau\backslash J}=
\left(\prod_{l=1}^ks_l^{c_ip_{il}-\sum_{j\in J}c_{ij}p_{jl}}\right)_{i\in I_\tau\backslash J}.$$

We claim that the vectors $(c_ip_{i1}-\sum_{j\in J}c_{ij}p_{j1},\ldots,c_ip_{ik}-\sum_{j\in J}c_{ij}p_{jk})\in\ZZ^k$ for $i\in I_\tau\backslash J$ are linearly independent. Indeed, since $c_ip_{il}-\sum_{j\in J}c_{ij}p_{jl}=0$ for every $i\in I_\tau\backslash J$ and every $l=k+1,\ldots,n$, any linear dependence relation among them extends to a linear dependence relation among $c_i\mathbf p_i-\sum_{j\in J}c_{ij}\mathbf p_j\in\ZZ^n$ for $i\in I_\tau\backslash J$, which in turn would give a linear dependence relation among the rows $\mathbf p_1,\ldots,\mathbf p_n$ of $P$, which is impossible since $P$ is invertible. We conclude that the image of $\varphi$ is a subtorus of $\Gm^{|I_\tau|- k_\tau}$ of dimension $|I_\tau|- k_\tau$, that is, $\varphi$ is surjective.
\end{proof}

In the particular case where $k_\tau=|I_\tau|$ (that is, $\tau$ contains exactly $\dim(\tau)+1$ rows of $P_0$) then $J=I_\tau$ and the system (\ref{eq2}) is simply
$$
\sum_{i\in I_\tau}a_i\bar p_{il}y_i=0\;(l=k+1,\ldots,n)
$$
which, since the rows $\mathbf p_{0,i}$ for $i\in J=I_\tau$ are linearly independent modulo $p$, has no non-zero solution for any $\mathbf a$.

\begin{remark}
 Suppose that a codimension one face $\tau$ of $\Delta$ not containing the origin spans the hyperplane $c_{k+1}z_{k+1}+\cdots+c_nz_n=c$ (with $c_1,\ldots,c_n$ relatively prime). Then the vector $(c,\ldots,c)\in\ZZ^{|I_\tau|}$ is a linear combination of the columns of $P_{0,\tau}$. In particular, if $c$ is divisible by $p$, then the columns of $P_{0,\tau}$ are linearly dependent modulo $p$ and therefore the rank of the reduction modulo $p$ of $P_{0,\tau}$ is less than $n-k=\dim(\tau)+1$.
 
 So a necessary condition for $(C_p)$ to be satisfied it that the minimal equation of the hyperplane spanned by any codimension one face of $\Delta$ not containing the origin has prime to $p$ constant term. However, this condition is not sufficient: consider the matrix $A=(2\;-1\;1)\in\mathcal M_{1,3}(\ZZ)$, which clearly has $d(A)=1$. Its nullspace has $\{(0,1,1),(1,1,-1)\}$ as a basis, so 
 $$
 P_0=\begin{pmatrix}
      0 & 1 \\ 1 & 1 \\ 1 & -1 
     \end{pmatrix}
     $$
    and $\Delta\subseteq\RR^2$ is the trapezoid with vertices $(0,0),(0,1),(1,1)$ and $(1,-1)$, with area $3/2$, which gives $v(A)=3$. There are two sides of $\Delta$ not containing the origin, which are contained in the lines with equations $x=1$ and $y=1$, both with constant term $1$. For $\tau$ the side determined by $(1,1)$ and $(1,-1)$ we have
    $$
    P_{0,\tau}=\begin{pmatrix}
      1 & 1 \\ 1 & -1 
     \end{pmatrix}
     $$
     which has rank $1$ modulo $2$. So $P_0$ does not satisfy $(C_2)$. In fact, for $p=2$, $\HH_{A,\chi}$ has generic rank $2$, not $3$ (see example \ref{dim1}).
\end{remark}

The following result gives a more explicit description of the set where $\HH_{A,\chi}$ is guaranteed to be lisse.

\begin{corollary}\label{non-deg-cor} Suppose that $P_0$ satisfies condition $(C_p)$. Let $U\subseteq \Gm^n$ be the largest dense open subset of $(t_1,\ldots,t_n)$ such that the polynomial $\sum_{i=1}^n t_i\prod_{j=k+1}^nx_j^{p_{ij}}$ is non-degenerate with respect to $\Delta$, and let $T\subseteq\Gm^n$ be the subtorus defined by the equations $\prod_{i=1}^n t_i^{p_{ij}}=1$ for $j=k+1,\ldots,n$. Then $W:=[A](T\cap U)$ is a dense open subset of $[A](\Gm^n)$, and $\HH_{A,\chi}$ is a shifted lisse sheaf of rank $d(A)_{\hat p}\cdot v(A)$ on $W$.
\end{corollary}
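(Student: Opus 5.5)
The plan is to reduce the statement to Theorem \ref{rank} and Theorem \ref{non-deg}, and then translate the open set $[Q]^{-1}[\mathbf d](V)$ appearing there, with $V=[P_1]^{-1}(U)$, into the set $[A](T\cap U)$.

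First, by Theorem \ref{non-deg} and condition $(C_p)$, the largest open set $U$ of non-degeneracy is dense. Moreover $V=[P_1]^{-1}(U)$ is a dense open subset of $\Gm^k$. Theorem \ref{rank} then gives that $\HH_{A,\chi}$ is a shifted lisse sheaf of rank $d(A)_{\hat p}v(A)$ on $[Q]^{-1}[\mathbf d](V)$. (When $k=n$ we have $U=\Gm^n$, $T=\Gm^n$, and the argument below degenerates trivially.) So the whole task is to prove the identity
$$[Q]^{-1}[\mathbf d]\bigl([P_1]^{-1}(U)\bigr)=[A](T\cap U)$$
and to show that this set is dense open in $[A](\Gm^n)$.

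The key observation is that $T$ is exactly the image of $[P_1]$. Since $P$ is unimodular, $P^{-1}$ exists over $\ZZ$. The map $[P]$ identifies $\Gm^k\times\{1\}$ with $[P_1](\Gm^k)$, and a point $\mathbf t=[P](\mathbf x)$ has $x_{k+1}=\cdots=x_n=1$ exactly when $[P^{-1}](\mathbf t)$ has its last $n-k$ coordinates equal to $1$. I must check that these equations match the ones defining $T$, namely $\prod_i t_i^{p_{ij}}=1$ for $j>k$. This is where I expect the only real subtlety: the equations for $T$ use the columns of $P_0$ (the transpose action), not the rows of $P^{-1}$. I would therefore verify directly that $\ker[P_0^{t}]$ equals $\mathrm{im}[P_1]$. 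Both are connected subtori of dimension $k$: connectedness of the kernel follows because $P_0^t$ extends to the unimodular $P^t$, so its Smith invariants are $1$. Containment is checked as follows: $\mathrm{im}[P_1]\subseteq\ker[P_0^t]$ iff $P_0^tP_1=0$. This orthogonality may fail for an arbitrary $P$. If it does, I would argue instead via $A$: the relevant property is that $[A]$ restricted to $T$ should behave like $[Q]^{-1}[\mathbf d]\circ[P_1]^{-1}$. For this it suffices to prove $[A](T\cap U)=[A]([P_1](V))$, which holds if $T=\mathrm{im}[P_1]$. Failing that, the fallback is to use that $[A]$ is constant on cosets of $\ker[A]=[P_0](\Gm^{n-k})$ (up to finite group), and that $T$ is a complement to $\ker[A]$ up to isogeny.

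Granting $T\cap U=[P_1](V)$, we compute $[A]\circ[P_1]=[AP_1]=[Q^{-1}DP^{-1}P_1]$. Since $P^{-1}P_1$ is the inclusion of the first $k$ coordinates, this equals $[Q]^{-1}\circ[\mathbf d]$. Hence $[A](T\cap U)=[Q]^{-1}[\mathbf d](V)$, as required. Density and openness follow because $[\mathbf d]$ is finite, hence closed, onto the closed subtorus $[\mathbf d](\Gm^k)=[Q]([A](\Gm^n))$. Concretely, the image of the open set $V$ equals the complement of the image of the closed set $\Gm^k\setminus V$, once we know $V$ is saturated for the finite kernel of $[\mathbf d]$. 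If it is not saturated, we replace $V$ by the intersection of its translates by $\ker[\mathbf d]$. This intersection is still dense open, and lisseness on it is still given by Theorem \ref{rank}.
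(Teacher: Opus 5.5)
Your reduction is the same as the paper's: Theorems \ref{rank} and \ref{non-deg}, together with $[A][P_1]=[AP_1]=[Q^{-1}][\mathbf d]$, which you derive correctly. But the step that carries the proof fails. Your ``key observation'' $T=[P_1](\Gm^k)$ requires $P_1^tP_0=0$, and this is false in general and cannot always be arranged. For $A=(1\;-1)$ one has $P=\begin{pmatrix}1&1\\0&1\end{pmatrix}$, so $T=\{t_1t_2=1\}$ while $[P_1](\Gm)=\{(s,1)\}$; moreover, any unimodular $P$ with $P_0=\pm(1,1)^t$ and $P_1\perp P_0$ would have even determinant. Everything after ``Granting $T\cap U=[P_1](V)$'' rests on this observation. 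Your fallback ($[A]$ kills $[P_0](\Gm^{n-k})$, and $T$ meets $\ker[A]$ in a finite group) is part of the right picture, but it omits the ingredient that makes it work: $U$ is stable under multiplication by $[P_0](\Gm^{n-k})$. Indeed, replacing $t_i$ by $t_i\prod_{j>k}v_j^{p_{ij}}$ is just the substitution $\mathbf x\mapsto\mathbf v\mathbf x$ in $f_{\mathbf t}$, which preserves non-degeneracy face by face. With that, the paper argues as follows. First, $\Gm^n=[P_1](\Gm^k)\times[P_0](\Gm^{n-k})$. The projection $\pi:T\to[P_1](\Gm^k)$ along the second factor is finite (since $T\cap\ker[A]$ is finite), hence surjective. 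By the invariance, $\mathbf t\in U\iff\pi(\mathbf t)\in U$. Finally, $[A]\circ\pi=[A]$ on $T$. This gives $[A]([P_1](\Gm^k)\cap U)=[A](T\cap U)$ without needing $T$ and $[P_1](\Gm^k)$ to coincide.

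Your last paragraph misplaces the saturation issue. Openness needs no saturation: $[A]|_T:T\to[A](\Gm^n)$, like $[\mathbf d]$ onto its image, is a finite flat isogeny and hence an open map, so $W$ is open, and dense since non-empty. Saturation matters for \emph{lisseness}. Take $[\mathbf d]_!$ of a complex that is lisse only on $V$; its stalk at $[\mathbf d](\mathbf s)$ involves the whole coset $\mathbf s\ker[\mathbf d]$, not just the points of that coset lying in $V$. Neither Theorem \ref{rank} nor the paper's proof of this corollary accounts for this, and the statement fails as written. For $A=(2\;-2)$ and $p$ odd, $(C_p)$ holds, $U=\{t_1+t_2\neq0\}$, $T=\{t_1t_2=1\}$, and $W=\{t^4\mid t^2\neq-1\}=\Gm$. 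But if $\chi_1\chi_2\neq\mathbf 1$, the fibre of $[A]$ over $1$ is $\{x_1=x_2\}\sqcup\{x_1=-x_2\}$. The second component contributes $\R\Gamma_c(\Gm,\LL_{\chi_1\chi_2})=0$, so the stalk at $1$ has rank $1$ rather than $2$. Likewise, the identity $\{t^{2N}\mid t^2\neq-1\}=\Gm\setminus\{(-1)^N\}$ used in Section \ref{dim1} holds only when $N_{\hat p}=1$. Replacing $V$ by its $\ker[\mathbf d]$-saturation, as you suggest, is the right repair. However, it proves lisseness on the smaller set $[A](\Gm^n)\setminus[A](T\setminus U)$, which is $\Gm\setminus\{(-1)^N\}$ for $A=(N\;-N)$, not on $W$. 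So your argument cannot both make that replacement and conclude lisseness on $[A](T\cap U)$.
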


Note that $U$ and $T$ are defined over $\ZZ[1/S]$, where $S$ is the finite set of primes $p$ such that $
P_0$ does not satisfy condition $(C_p)$. So this proves the existence of the subscheme $Z\subseteq{\mathbb G}^r_{m,\Spec(\ZZ[1/S\ell])}$ stated in Theorem \ref{main}.

\begin{proof} Since the columns of $P_0$ are linearly independent and part of a basis of $\ZZ^n$, $T$ is connected of dimension $k$. Its intersection with the kernel of $[A]$ is finite, since the columns of $P_0$ form a $\ZZ$-basis for the orthogonal complement of the rows of $A$, so the set of rows of $A$ together with them generates a full rank subgroup of $\ZZ^n$. 

By Theorems \ref{rank} and \ref{non-deg}, $\HH_{A,\chi}$ is a shifted lisse sheaf of rank $d(A)_{\hat p}\cdot v(A)$ on $[Q]^{-1}[\mathbf d](V)$, where $V=[P_1]^{-1}(U)\neq\emptyset$. Note that $[\mathbf d]=[D_1]$, where $D_1$ is the matrix consisting of the first $k$ columns of $D$. Since $Q^{-1}D=AP$, $[Q^{-1}][\mathbf d]=[Q^{-1}D_1]=[AP_1]=[A][P_1]$, so $[Q^{-1}][\mathbf d](V)=[A][P_1](V)=[A][P_1][P_1]^{-1}(U)=[A]([P_1](\Gm^k)\cap U)$.

Since $P$ is invertible, the decomposition $P=P_1\sqcup P_0$ induces a decomposition $\Gm^n=[P_1](\Gm^k)\times[P_0](\Gm^{n-k})$. The intersection of $T$ with $[P_0](\Gm^{n-k})$ is finite: since $AP_0=\mathbf 0$, $[P_0](\Gm^{n-k})$ is in the kernel of $[A]$, and $T\cap\ker([A])$ is finite. In particular, the projection $\pi:T\to [P_1](\Gm^k)$ is finite, and therefore surjective by dimension reasons. So $[A]([P_1](\Gm^k)\cap U)=[A](\pi(T)\cap U)$.

Now $\pi(\mathbf t)\in U$ if and only if $\mathbf t\in U$: we have $\mathbf t=\pi(\mathbf t)\cdot\mathbf u$ for some $\mathbf u=[P_0](\mathbf v)\in[P_0](\Gm^{n-k})$. And, by the change of variable $\mathbf x\mapsto\mathbf v\mathbf x$, the polynomial $\sum_i t_i\prod_{j=k+1}^n x_j^{p_{ij}}$ is non-degenerate with respecto to $\Delta$ if and only if $\sum_i t_iu_i\prod_{j=k+1}^n x_j^{p_{ij}}$ is. So $[A](\pi(T)\cap U)=[A](\pi(T\cap U))$.

Finally, $[A](\pi(T\cap U))=[A](T\cap U)$ because, if $\mathbf t\in T$ splits as $\pi(\mathbf t)\cdot \mathbf u$ with $\mathbf u\in[P_0](\Gm^{n-k})\subseteq\ker([A])$, then $[A](\mathbf t)=[A](\pi(\mathbf t)\cdot \mathbf u)=[A](\pi(\mathbf t))$.
\end{proof}
 
We now focus on some particularly nice cases.

\begin{proposition}\label{maximal}
Suppose that $P_0$ satisfies condition $(C_p)$, and every codimension one face of $\Delta$ not containing the origin contains exactly $n-k$ rows of $P_0$. Then $\HH_{A,\chi}$ is a shifted lisse sheaf of rank $d(A)_{\hat p}\cdot v(A)$ on $[A](\Gm^n)$. In particular, if $k=r$, then $\HH_{A,\chi}$ is a shifted lisse sheaf of rank $d(A)_{\hat p}\cdot v(A)$ on $\Gm^r$.
\end{proposition}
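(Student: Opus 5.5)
The plan is to show that, under the extra hypothesis, the open set $U$ of Corollary \ref{non-deg-cor} is all of $\Gm^n$. Then $W=[A](T\cap\Gm^n)=[A](T)$. From the proof of that corollary we already know $[A](T)=[A]([P_1](\Gm^k))=[A][P_1](\Gm^k)$. Since $AP=(AP_1\sqcup\mathbf 0)$ and $[P]$ is an automorphism, this set equals $[A](\Gm^n)$. For the last sentence: when $k=r$, property (4) gives $[A](\Gm^n)=\Gm^r$.

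So the main step is to check that $f_{\mathbf a}$ is non-degenerate with respect to $\Delta$ for \emph{every} $\mathbf a\in\Gm^n(\overline{\Fq})$. Fix a face $\tau$ of $\Delta$ not containing the origin.

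First, I claim that $\tau$ lies in some codimension one face $\tau'$ not containing the origin. The set of faces containing $\tau$ is a face lattice. If every facet containing $\tau$ contained $0$, then their intersection, which is $\tau$ itself, would contain $0$. This uses that $\Delta$ is full-dimensional, which holds since $P_0$ has rank $n-k$.

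By hypothesis $\tau'$ contains exactly $n-k=\dim\tau'+1$ rows of $P_0$. These span the hyperplane of $\tau'$, which avoids the origin, so they are linearly independent. Hence the rows of $P_0$ lying in $\tau$ are a subset of these and are also linearly independent. Their number is at most $\dim(\tau)+1$ and, since they span $\tau$, exactly $\dim(\tau)+1$. Thus $k_\tau=|I_\tau|$.

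By condition $(C_p)$ these rows remain linearly independent modulo $p$. We are now in the special case treated right after Theorem \ref{non-deg}. The system (\ref{eq2}) reduces to $\sum_{i\in I_\tau}a_i\bar p_{il}y_i=0$ for $l=k+1,\ldots,n$. Since the rows are independent mod $p$, this forces $a_iy_i=0$ for all $i$, so there is no solution with all $y_i\neq0$. This holds for every $\mathbf a\in\Gm^n$, with no open condition needed.

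The main obstacle is really only the combinatorial point that every face not containing the origin lies in a facet not containing the origin, together with the inheritance of affine independence. Everything else is bookkeeping from Corollary \ref{non-deg-cor} and its proof. I must also take care that the identity $[A](T)=[A](\Gm^n)$ uses only the finiteness and surjectivity of the projection $T\to[P_1](\Gm^k)$, which was established in that proof.
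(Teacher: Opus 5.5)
Your proposal is correct and follows the paper's proof. You show that every face $\tau$ of $\Delta$ avoiding the origin satisfies $|I_\tau|=\dim(\tau)+1$, invoke the remark after Theorem \ref{non-deg} to get non-degeneracy for all parameters, and conclude that the lisse locus is all of $[A](\Gm^n)$. The differences are minor: you get $|I_\tau|=\dim(\tau)+1$ by placing $\tau$ in a facet avoiding the origin and inheriting linear independence, where the paper uses a chain-of-faces contradiction. You also prove non-degeneracy on all of $\Gm^n$ via Corollary \ref{non-deg-cor}, whereas the paper works with $\mathbf a=[P_1](\mathbf s)$ in Theorem \ref{rank}.
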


\begin{proof}
 The hypothesis implies that any $b$-dimensional face $\tau$ of $\Delta$ that does not contain the origin contains exactly $b+1$ rows of $P_0$, for $0\leq b< n-k-1$. Otherwise, it would contain at least $b+2$ rows, and any $b+1$ dimensional face of $\Delta$ containing $\tau$ but not the origin would contain at least $b+3$ rows. Continuing the process, we would end up with a codimension one face of $\Delta$ not containing the origin and containing at least $n-k+1$ rows of $P_0$.
 
 Let $\tau$ be such a $b$-dimensional face. Then $P_{0,\tau}$ has rank $b+1$ so, applying the comment after Theorem \ref{non-deg}, the partial derivatives of the polynomial $\sum_{i\in I_\tau} \sigma_i(\mathbf s) \prod_{j=k+1}^n x_j^{p_{ij}}$ have no non-zero common solution for any $\mathbf s\in\Gm^k$. That is, the polynomial $\sum_{i=1}^n \sigma_i(\mathbf s) \prod_{j=k+1}^n x_j^{p_ {ij}}$ is non-degenerate with respect to $\Delta$ for every $\mathbf s\in\Gm^k$.
\end{proof}

\begin{proposition}\label{lin-com}
 Suppose that $P_0$ satisfies condition $(C_p)$ and, for every choice of $n-k+1$ columns of $A$, their sum is not a $\QQ$-linear combination of the remaining $k-1$ columns. Then $\HH_{A,\chi}$ is a shifted lisse sheaf of rank $d(A)_{\hat p}\cdot v(A)$ on $[A](\Gm^n)$. In particular, if $k=r$, then $\HH_{A,\chi}$ is a shifted lisse sheaf of rank $d(A)_{\hat p}\cdot v(A)$ on $\Gm^r$.
\end{proposition}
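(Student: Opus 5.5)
The plan is to reduce Proposition \ref{lin-com} to Proposition \ref{maximal}. Both propositions assume condition $(C_p)$, so it suffices to prove that the column hypothesis forces every codimension one face of $\Delta$ not containing the origin to contain exactly $n-k$ rows of $P_0$. Once that is shown, Proposition \ref{maximal} gives the conclusion directly, including the case $k=r$.

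\emph{Lower bound.} Let $\tau$ be a codimension one face of $\Delta$ not containing the origin. Its vertices are vertices of $\Delta$, and none of them is the origin, so they are all rows of $P_0$. Since $\dim\tau=n-k-1$, the face $\tau$ contains at least $n-k$ affinely independent points. Hence it contains at least $n-k$ rows of $P_0$.

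\emph{Upper bound, by contradiction.} Suppose $\tau$ contains at least $n-k+1$ rows. The affine hyperplane spanned by $\tau$ does not pass through the origin, so it has a rational equation $\langle \mathbf w,\mathbf y\rangle=1$ with $\mathbf w\in\QQ^{n-k}$. Set $\mathbf v:=P_0\mathbf w\in\QQ^n$. Since the columns of $P_0$ lie in $\nullsp(A)$, we have $A\mathbf v=0$. Moreover $v_i=\langle \mathbf p_{0,i},\mathbf w\rangle=1$ for every $i\in I_\tau$. The relation $A\mathbf v=0$ therefore reads
$$\sum_{i\in I_\tau}\mathbf a_i=-\sum_{i\notin I_\tau}v_i\mathbf a_i .$$
Now choose any subset $I\subseteq I_\tau$ with $|I|=n-k+1$. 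Moving the terms for $i\in I_\tau\setminus I$ to the right-hand side (each with coefficient $-1$) gives
$$\sum_{i\in I}\mathbf a_i=-\sum_{i\in I_\tau\setminus I}\mathbf a_i-\sum_{i\notin I_\tau}v_i\mathbf a_i .$$
So the sum of these $n-k+1$ columns is a $\QQ$-linear combination of the remaining $k-1$ columns, which contradicts the hypothesis.

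The only step needing care is this upper-bound construction: producing the null vector $\mathbf v$ from the supporting hyperplane, and checking that it has coefficient exactly $1$ on all of $I_\tau$, so that the relation holds even when $I_\tau$ is strictly larger than $n-k+1$. The remaining steps are formal.
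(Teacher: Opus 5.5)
Your proof is correct and follows essentially the same route as the paper: you reduce to Proposition \ref{maximal} by ruling out a codimension one face $\tau$ not containing the origin that contains $n-k+1$ rows of $P_0$, using a null vector of $A$ that is constant on $I_\tau$. The only cosmetic difference is that you write this null vector directly as $P_0\mathbf w$ from the hyperplane equation $\langle\mathbf w,\mathbf y\rangle=1$, whereas the paper obtains it by comparing the ranks of $P_{0,\tau}$ with and without an appended column of ones.
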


\begin{proof}
We show that any codimension one face of $\Delta$ not containing the origin contains exactly $n-k$ rows of $P_0$, and apply proposition \ref{maximal}.

Suppose, by contradiction, that the $(n-k-1)$-dimensional face $\tau$ contained $n-k+1$ rows of $P_0$; $i_1,\ldots,i_{n-k+1}$. Then
$$
\mathrm{rank}\begin{pmatrix}
     p_{i_1,k+1} & \cdots & p_{i_1,n} & 1 \\
     \vdots & \ddots & \vdots & \vdots \\
     p_{i_{n-k},k+1} & \cdots & p_{i_{n-k},n} & 1 \\
     p_{i_{n-k+1},k+1} & \cdots & p_{i_{n-k+1},n} & 1
    \end{pmatrix}=n-k
$$
since the rows are contained in a hyperplane, and
$$
\mathrm{rank}\begin{pmatrix}
     p_{i_1,k+1} & \cdots & p_{i_1,n} \\
     \vdots & \ddots & \vdots \\
     p_{i_{n-k},k+1} & \cdots & p_{i_{n-k},n} \\
     p_{i_{n-k+1},k+1} & \cdots & p_{i_{n-k+1},n} 
    \end{pmatrix}
    =n-k
$$
since said hyperplane does not contain the origin. So the last column of the first matrix must be a $\QQ$-linear combination of the remaining ones. In other words, there is some element $\mathbf v=(v_1,\ldots,v_n)$ in the nullspace of $A$ with $v_{i_1}=\cdots=v_{i_{n-k+1}}=c\in\ZZ\backslash\{0\}$. If $J=\{1,\ldots,n\}\backslash\{i_1,\ldots,i_{n-k+1}\}$ we then have
$$
c(\mathbf a_{i_1}+\cdots+\mathbf a_{i_{n-k+1}})+\sum_{j\in J}v_j\mathbf a_j=\mathbf 0\Rightarrow
$$
$$
\Rightarrow \mathbf a_{i_1}+\cdots+\mathbf a_{i_{n-k+1}}=-\sum_{j\in J}c^{-1}v_j\mathbf a_j,
$$
in contradiction with the hypothesis.
\end{proof}

Under the hypothesis of the proposition \ref{maximal}, the $(C_p)$ criterion can be easily checked on the matrix $A$:

\begin{proposition}
 Suppose that every codimension one face of $\Delta$ not containing the origin contains exactly $n-k$ rows of $P_0$. Then $P_0$ satisfies condition $(C_p)$ if and only if for every codimension one face $\tau$ of $\Delta$ not containing the origin, the $r\times k$ submatrix of $A$ formed by the $j$-th columns for $j\not\in I_\tau$ has rank $k$ modulo $p$.
\end{proposition}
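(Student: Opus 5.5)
Under the hypothesis, every face $\tau$ of $\Delta$ not containing the origin contains exactly $\dim(\tau)+1$ rows of $P_0$; this was shown at the start of the proof of Proposition \ref{maximal}. Each such $\tau$ lies in some codimension one face $\tau'$ not containing the origin. The rows of $P_{0,\tau}$ form a subset of the rows of $P_{0,\tau'}$, so if the $n-k$ rows of $P_{0,\tau'}$ are linearly independent modulo $p$, so are those of $P_{0,\tau}$. Hence $(C_p)$ holds if and only if, for every codimension one face $\tau$ not containing the origin, the square $(n-k)\times(n-k)$ matrix $P_{0,\tau}$ is invertible modulo $p$. In other words, $p\nmid\det P_{0,\tau}$.

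It remains to translate this into a statement about $A$. Set $I=I_\tau$, with $|I|=n-k$, and let $I^c$ be its complement, with $|I^c|=k$. We have $AP_0=0$, and the columns of $P_0$ form a $\ZZ$-basis of $\nullsp(A)$, which is a saturated sublattice of $\ZZ^n$ because $P$ is unimodular. The key claim is that $P_{0,I}$ is invertible modulo $p$ exactly when the reduction $\bar A_{I^c}$ (the columns of $A$ indexed by $I^c$) has rank $k$ over $\Fp$. To prove it, I would use that $\ZZ^n/\nullsp(A)\cong\ZZ^k$ via $P^{-1}$. Write $\bar P^{-1}=\binom{R_1}{R_0}$ with $R_1$ of size $k\times n$. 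Then $\bar R_1\bar P_0=0$ and $\bar R_1$ has rank $k$, so over $\Fp$ the space $\ker\bar R_1$ is exactly the column span of $\bar P_0$. Moreover $A=Q^{-1}DP^{-1}=Q^{-1}D_1R_1$.

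The key step is the following duality. Projection of $\ker\bar R_1=\mathrm{col}(\bar P_0)$ onto the coordinates in $I$ is injective if and only if no nonzero vector of the kernel is supported on $I^c$. That is, $\bar P_{0,I}$ is invertible if and only if $(\bar R_1)_{I^c}$ (the columns of $\bar R_1$ indexed by $I^c$) is injective. Since $(\bar R_1)_{I^c}$ is a $k\times k$ matrix, this is equivalent to $\det (R_1)_{I^c}\not\equiv 0 \pmod p$. This is the standard complementary-minor identity for a unimodular $P$: up to sign, $\det P_{0,I}=\det (P^{-1})_{\{1..k\},I^c}$ (Jacobi). Finally, $A_{I^c}=Q^{-1}D_1(R_1)_{I^c}$, so $\mathrm{rank}\,\bar A_{I^c}=k$ if and only if both $\bar D_1$ has full rank $k$ and $(\bar R_1)_{I^c}$ is invertible.

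This exposes the main obstacle: the factor $D_1$. If some invariant factor $d_i$ is divisible by $p$, then $\bar A$ itself has rank less than $k$. The stated equivalence then seems to require $p\nmid d(A)$, or else one must argue directly at the integral level. I would first try to interpret "rank $k$ modulo $p$" so that it is insensitive to this factor, for instance by dividing by the gcd of the $k\times k$ minors, or by using $\det A_{I^c}\cdot(\text{gcd of minors})^{-1}$. Jacobi's identity shows $\det P_{0,I}=\pm\det A'_{I^c}/d(A)$ for the relevant $k\times k$ minors $A'$ of the rank-$k$ rows. So the cleanest formulation is that $p\nmid\det P_{0,I}$ if and only if the gcd of the $k\times k$ minors of $A_{I^c}$ is not divisible by $p\cdot d(A)_p$. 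When $p\nmid d(A)$ this reduces exactly to $\mathrm{rank}\,\bar A_{I^c}=k$. I would check whether this extra care is needed or whether the intended statement tacitly uses $p\nmid d(A)$, and present the argument via the Jacobi complementary minor identity applied to $P$ and $P^{-1}$, combined with $A=Q^{-1}D_1R_1$.
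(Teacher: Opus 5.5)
Your analysis is correct, and it is more careful than the paper's. The paper makes the same reduction to codimension one faces and then argues directly with nullspace vectors. If $\bar P_{0,\tau}$ is singular, some nonzero $\bar P_0\bar c\in\ker\bar A$ vanishes on $I_\tau$, so the $k$ complementary columns of $\bar A$ are dependent. Conversely, a dependence among those columns, extended by zeros, gives an element of $\ker\bar A$ vanishing on $I_\tau$, which the paper then treats as a combination of the columns of $\bar P_0$. That last step is the identity $\ker\bar A=\mathrm{col}(\bar P_0)$, which holds exactly when $\bar A$ has rank $k$, i.e.\ when $p\nmid d(A)$.

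Your factorization $A=Q^{-1}D_1R_1$ uses $\ker\bar R_1$, which always equals $\mathrm{col}(\bar P_0)$, in place of $\ker\bar A$. This separates the unimodular part from the invariant factors and yields a criterion valid for every $p$: $(C_p)$ holds iff $p\nmid\det P_{0,I_\tau}$ for every such $\tau$, iff the gcd of the $k\times k$ minors of $A_{I_\tau^c}$ has the same $p$-adic valuation as $d(A)$. For $k<r$ the relation with $d(A)$ holds for this gcd (by Cauchy--Binet), not for an individual minor. So your sentence expressing $\det P_{0,I}$ through individual minors $A'$ should be phrased through the gcd, as your final formulation already is. The paper's argument is shorter, and its valid half is all it uses later; yours is what a statement correct for all $p$ requires.

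You can also settle the question you leave open: the statement as written is false when $p\mid d(A)$. Take $A=(2\;\,2)$ and $p=2$. Then $P_0=\binom{1}{-1}$ and $\Delta=[-1,1]$, each vertex $\pm1$ contains exactly one row, and $(C_2)$ holds. Yet the column of $A$ complementary to either $I_\tau$ is $(2)$, of rank $0$ modulo $2$. More generally, replacing $A$ by $pA$ changes none of $\nullsp(A)$, $P_0$, $\Delta$ or $(C_p)$, but makes $\bar A=0$.

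The correct statement is that the rank condition on $A$ implies $(C_p)$ for every $p$, and for $k<n$ the rank condition is equivalent to $(C_p)$ together with $p\nmid d(A)$. Only the first implication is used afterwards in the paper (the sufficient condition via $r\times r$ minors), so nothing downstream breaks.
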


\begin{proof}
 Any submatrix formed by the $b+1\leq n-k$ rows of $P_0$ contained in some $b$-dimensional face $\tau$ of $\Delta$ not containing the origin has rank $b+1$. So $P_0$ satisfies condition $(C_p)$ if and only if the same happens for the reduction of $P_0$ modulo $p$. It suffices to check this for $b=n-k-1$ (since any face not containing the origin is contained in a codimension one face not containing the origin).
 
 If the condition fails for some face $\tau$ formed by the rows $i_1,\ldots,i_{n-k}$, then some non-trivial linear combination of the columns of $P_0$ has its $i_1,\ldots,i_{n-k}$-th coordinates equal to zero modulo $p$. That is, some element in the nullspace of $A$ has all those coordinates equal to zero modulo $p$ (but not all of the remaining ones), which means the remaining $k$ columns of $A$ are linearly dependent modulo $p$ (and so the rank of the corresponding submatrix of $A$ modulo $p$ is $<k$). Conversely, if the $k$ columns of $A$ are linearly dependent modulo $p$, then any dependence relation among them extends to a linear dependence relation among all columns of $A$ with coefficient $0$ for the $i_1,\ldots,i_{n-k}$-th columns, so there is an element in the nullspace of $A$ modulo $p$ (linear combination of the columns of $P_0$) with those coordinates equal to $0$, which implies the submatrix of $P_0$ formed by those rows has rank $<n-k$ modulo $p$.
\end{proof}

In particular, for $k=r$ a sufficient condition for property $(C_p)$ to hold under this hypothesis is that the determinant of any $r\times r$ submatrix of $A$ is not a non-zero multiple of $p$.

\section{Examples}\label{examples}

\subsection{The case $r=1$}\label{dim1}

This is the classical one-dimensional hypergeometric sheaves which were extensively studied by N. Katz in \cite{katz1990esa}. Let
$$
A=(n_1\cdots n_r\; -m_1\cdots -m_s)
$$
with $n_i,m_i>0$. By \cite[5.6.2]{katz1988gauss}, if $[n]$ denotes the $n$-th power map $\Gm\to\Gm$, then $[n]_!(\LL_\psi\otimes\LL_\chi)[1]$ is, up to a multiplicative translation by $n^{-n}$, geometrically isomorphic to $(\LL_\psi\otimes\LL_{\xi_1})[1]\ast\cdots\ast(\LL_\psi\otimes\LL_{\xi_n})[1]$, where $\xi_1,\ldots,\xi_n$ are the $n$-th roots of $\chi$ if $n$ is a prime to $p$ positive integer; and $[p]_!(\LL_\psi\otimes\LL_\chi)[1]\cong(\LL_\psi\otimes\LL_{\chi^{1/p}})[1]$. We may therefore assume (for the purpose of computing the generic rank) that all columns of $A$ are $(1)$ and $(-1)$:
$$
A=(1\cdots 1 -1\cdots -1)
$$
with $N:=\sum_i(n_i)_{\hat p}$ $1$'s and $M:=\sum_i(m_i)_{\hat p}$ $-1$'s. Under the notation of \cite[Chapter 8]{katz1990esa}, our hypergeometric sheaf $\mathcal H_{A,\chi}$ is isomorphic to $$\chi_{N+1}\cdots\chi_{N+M}(-1)^{deg}\otimes\mathrm{Hyp}_{(-1)^M}(!,\psi;\chi_1,\ldots,\chi_N;\bar\chi_{N+1},\ldots,\bar\chi_{N+M}),$$
since \cite{katz1990esa} uses the conjugates of $\LL_\psi\otimes\LL_{\chi_i}$ for the ``upstairs'' characters, and $\LL_{\bar\psi}\otimes\LL_{\bar\chi_i}=\chi_i(-1)^{deg}\otimes [-1]^\ast(\LL_{\psi}\otimes\LL_{\chi_i})$.

Modulo an inversion of $\Gm$, we can assume that $N>0$. A basis for the nullspace of $A$ is then formed by the vectors $\mathbf e_1-\mathbf e_i$ for $i=2,\ldots,N$ and $\mathbf e_1+\mathbf e_i$ for $i=N+1,\ldots,N+M$. Then
$$
P_0=\begin{pmatrix}
     1 & \cdots & 1 & 1 & \cdots & 1 \\
     -1 &\cdots & 0 & 0 & \cdots & 0 \\ 
     \vdots & \ddots & \vdots & \vdots & \ddots & \vdots \\
     0 & \cdots & -1 & 0 & \cdots & 0 \\
     0 & \cdots & 0 & 1 & \cdots & 0 \\
     \vdots & \ddots & \vdots & \vdots & \ddots & \vdots \\
     0 & \cdots & 0 & 0 & \cdots & 1 
    \end{pmatrix},
$$
which satisfies the $(C_p)$ condition for all primes $p$ (as all its $(N+M-1)\times(N+M-1)$ minors are $\pm 1$). Then the rank of $\HH_{A,\chi}$ and the open set $U$ where it is lisse are the same for all $p$ (and for any choice of characters $\chi_i$). We can then pick any $p>N+M$ and apply the previous isomorphism again (in the other direction) to reduce to the case $A=(N\;-M)$.

If $M=0$ we can take $A=(N)$, so the nullspace is trivial and $\HH_{A,\chi}$ is lisse on $\Gm$ of rank $d(A)_{\hat p}=N$. Similarly, if $N=0$ then $\HH_{A,\chi}$ is lisse on $\Gm$ of rank $M$. If both $N$ and $M$ are non-zero, let $D=\gcd(N,M)$. Then the nullspace of $A$ is generated by $(M/D\; N/D)$, and $d(A)_{\hat p}=D_{\hat p}=D$. The polytope $\Delta$ is now the interval $[0,\sup(N,M)/D]$, with volume $\sup(N,M)/D$. We conclude that the generic rank of $\HH_{A,\chi}$ is $\sup(N,M)$, as it was known from \cite[Theorem 8.4.2]{katz1990esa}.

The hypothesis of proposition \ref{lin-com} in this case (where $n-k+1=n$) says simply that $N-M\neq 0$, that is, that $N\neq M$. In that case, proposition \ref{lin-com} states that $\mathcal H_{A,\chi}$ is a shifted lisse sheaf on $\Gm$, as again it is known by \cite[Theorem 8.4.2]{katz1990esa} (in fact, in this case, the condition is also necessary).

In the remaining case $N=M$, we have
$$
P_0=\left(\begin{array}{c}
1   \\
1   \\
\end{array}\right)
$$
and the polynomial we need to test for non-degeneracy in corollary \ref{non-deg-cor} is $(t_1+t_2)x$, which clearly is degenerate with respect to $\Delta=[0,1]$ if and only if $t_1+t_2=0$. The subtorus $T$ is here $t_1t_2=1$, so $T\cap W=\{(t,t^{-1})|t^2\neq -1\}$. Its image by $[A]$ is then $\{t^N/t^{-N}|t^2\neq -1\}=\{t^{2N}|t^2\neq -1\}=\Gm\backslash\{(-1)^N\}$. So $\mathcal H_{A,\chi}$ is a shifted lisse sheaf of rank $N$ outside the point $(-1)^N$, again matching the known results in \cite[Theorem 8.4.2]{katz1990esa}.  Note that, in this case, the multiplicative translations needed to go from $(1\cdots 1\;-1\cdots -1)$ to $(N\; -N)$ cancel each other out, so the singular point does not move. 

\subsection{The case $k=n$} Now $[A]$ is a finite map, composition of a purely inseparable map of degree $d(A)_p$, a finite \'etale map of degree $d(A)_{\hat p}$ and a closed immersion onto the subtorus $T$ of $\Gm^r$ image of $\Gm^n$ under the homomorphism $[B]$ defined by the first $n$ columns of $Q^{-1}$. In particular, $\HH_{A,\chi}$ is a shifted lisse sheaf on $T$ of rank $d(A)_{\hat p}$.

This can be generalized as follows, since convolution commutes with direct image under a torus homomorphism: suppose that $\vv_1,\ldots,\vv_n\in\ZZ^r$ are primitive and linearly independent, and the columns of $A$ consist of $m_i$ copies of $\vv_i$ and $n_i$ copies of $-\vv_i$ for $i=1,\ldots,n$, with $m_i+n_i>0$. Let $\chi=\otimes_{i,j}\chi_{ij}$ with $i=1,\ldots,n$, $j=1,\ldots,m_i+n_i$. Let $\HH_i$ be the one-dimensional hypergeometric associated to the matrix $(1\cdots 1\;-1\cdots -1)$, with $m_i$ $1$'s and $n_i$ $-1$'s, and the character $\chi_i=\otimes_j\chi_{ij}$.

Then $\HH_{A,\chi}$ is a shifted constructible sheaf on $T$ of generic rank $d(A)_{\hat p}\cdot\prod_i\sup(m_i,n_i)$. More precisely, if $C$ is the matrix $(\vv_1|\cdots|\vv_n)$ (which has rank $n$ and the same $d$ as $A$), then $\HH_{A,\chi}=\R[C]_!(\HH_1\boxtimes\cdots\boxtimes\HH_n)$. In particular, $\HH_{A,\chi}$ is lisse on the complement in $T$ of the image by $[C]$ of the union of the subtori of $\Gm^n$ defined by $x_i=(-1)^{n_i}$ for all $i\in\{1,\ldots,n\}$ such that $m_i=n_i$.

\subsection{The case $k=r=n-1$} That is, $A$ is an $r\times(r+1)$ matrix of rank $r$. Let $A_i$ be the $r\times r$ matrix obtained by removing the $i$-th column of $A$, and $\delta_i=(-1)^{i-1}\det(A_i)/d(A)$. Then the right nullspace of $A$ is generated by $(\delta_1,\ldots,\delta_n)$: indeed, it is a rank one subgroup of $\ZZ^n$, so any primitive element in it forms a basis. The vector $(\det(A_1),-\det(A_2),\ldots,(-1)^{n-1}\det(A_n))$ is in the nullspace, since the dot product with any row of $A$ is the determinant of a matrix with two identical rows, and therefore 0; and the greatest common divisor of its elements is $d(A)$.

Let $\delta^+=\max(\delta_i)$ and $\delta^-=\min(\delta_i)$. The polytope $\Delta$ is $[0,\delta^+]$ if all non-zero $\delta_i$ are positive, $[\delta^-,0]$ if all non-zero $\delta_i$ are negative, and $[\delta^-,\delta^+]$ if there are positive and negative $\delta_i$'s. Condition $(C_p)$ holds if $\delta^+$ (resp. $\delta^-$, both $\delta^+$ and $\delta^-$) is prime to $p$. In that case, the generic rank of $\HH_{(A,\chi)}$ is $\max(0,\delta^+)-\min(0,\delta^-)$.

If $\delta^+>0$, let $I=\{i=1,\ldots,n|\delta_i=\delta^+\}$. If $\delta^-<0$, let $J=\{i=1,\ldots,n|\delta_i=\delta^-\}$. Then the polynomial $\sum_{i=1}^n \sigma_i(\mathbf s) \prod_{j=k+1}^n x_j^{p_ {ij}}=\sum_{i=1}^n \sigma_i(\mathbf s) x_n^{\delta_i}$ is non-degenerate with respect to $\Delta$ if and only if $\sum_{i\in I}s_1^{i1}\cdots s_r^{ir}\neq 0$ if $\delta^+>0$ and $\sum_{i\in J}s_1^{i1}\cdots s_r^{ir}\neq 0$ if $\delta^-<0$. In particular, if $I$ and $J$ have at most one element then $\HH_{A,\chi}$ is a shifted lisse sheaf on the entire $\Gm^r$.

\section{Hypergeometric sheaves in characteristic 0}\label{lift}

If the matrix $P_0$ satisfies the (transposed) ``non-confluence'' condition of \cite[Theorem 0.8]{fu2016}, that is, the vector $(1,1,\ldots,1)$ is a linear combination of its columns, then the theorem implies that the corresponding GKZ-hypergeometric sheaf is liftable to characteristic zero and, in particular, has tame ramification for generic $p$. Since the hypergeometric $\HH_{A,\chi}$ can be derived from it by operations that are defined in characteristic zero, the same can be said about it.

This condition is equivalent to the fact that $(1,1,\ldots,1)$ is in the nullspace of $A$, that is, that the sum of the columns of $A$ is the zero vector. In this section, we will give an independent proof of the liftability to characteristic 0 of the hypergeometric sheaf $\HH_{A,\chi}$ in that case.

For $N\in\ZZ$ prime to $p$, let $R=\ZZ[\mu_N][1/N]$ be the ring of integers of the $N$-th cyclotomic extension of $\QQ$, with the primes dividing $N$ inverted. It is an \'etale $\ZZ[1/N]$-algebra. Let $\mathfrak p\in\Spec(R)$ be a prime over $p$. Then $R/\mathfrak p\cong\mathbb F_{p^f}$, where $f$ is the smallest positive integer such that $N|p^f-1$. If $N|q-1$, every inclusion $R/\mathfrak p\hookrightarrow \Fq$ induces a specialization map $\Spec(\Fq)\to\Spec(R)$.

\begin{theorem}
 Suppose that the columns of $A$ add up to $\mathbf 0$. Then
 
 \begin{enumerate}
  \item There exist objects $\mathcal K_i$ in $\Dbc({\mathbb G}^r_{m,R[1/\ell]},\Ql)$ for $i=1,\ldots,N^{n-1}$ such that for every finite field $\Fq$ with $(\ell,q)=1$ and $N|q-1$, every specialization $\Spec(\Fq)\to\Spec(R)$ and every character $\chi=\otimes_{i=1}^n\chi_i:(\Fq^\times)^n\to\Ql^\times$ such that $\chi^N=\mathbf 1$ and $\chi_1\cdots\chi_n$ is trivial, $\HH(A,\chi)\cong\mathcal K_i|_{{\mathbb G}^r_{m,\Fq}}$ for some $i$.
  \item There exist objects $\mathcal M_i$ in $\Dbc({\mathbb G}^r_{m,R[1/\ell]},\Ql)$ for $i=1,\ldots,N^{n-1}(N-1)$ such that for every finite field $\Fq$ with $(\ell,q)=1$ and $N|q-1$, every specialization $\Spec(\Fq)\to\Spec(R)$ and every character $\chi=\otimes_{i=1}^n\chi_i:(\Fq^\times)^n\to\Ql^\times$ such that $\chi^N=\mathbf 1$ and $\chi_1\cdots\chi_n$ is not trivial, $\HH(A,\chi)\cong\mathcal M_i|_{{\mathbb G}^r_{m,\Fq}}\otimes g(\psi,\chi_1\cdots\chi_n)^{deg}$ for some $i$, where $$g(\psi,\chi_1\cdots\chi_n)=-\sum_{x\in\Fq^\times}\psi(x)\chi_1\cdots\chi_n(x)$$ is the Gauss sum and $\alpha^{deg}$ is the geometrically trivial sheaf on which Frobenius acts by multiplication by $\alpha$.
 \end{enumerate}

\end{theorem}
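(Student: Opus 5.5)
The plan is to make the additive character $\psi$ disappear from the construction by integrating out a scaling variable. This is possible precisely because the columns of $A$ add up to $\mathbf 0$. After that step, everything left is built from Kummer sheaves and torus homomorphisms, and both of these are defined over $R[1/\ell]$.

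The starting point is the Lemma of Section \ref{vs}, which gives
$$\HH_{A,\chi}=\R[A]_!(\LL_{\psi(x_1+\cdots+x_n)}\otimes\LL_\chi)[n].$$
On $\Gm^n$ I would make the change of coordinates $x_1=\lambda$ and $x_j=\lambda y_j$ for $j\geq 2$. This is an automorphism of tori, given by a unimodular matrix. Since $\sum_j\mathbf a_j=\mathbf 0$, we have $[A](x)=\lambda^{\sum_j\mathbf a_j}\prod_{j\geq 2}y_j^{\mathbf a_j}=[A'](y)$, where $A'\in\mathcal M_{r,n-1}(\ZZ)$ is formed by the columns $\mathbf a_2,\ldots,\mathbf a_n$. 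Under the same change, $\psi(\sum x_i)=\psi(\lambda u)$ with $u=1+y_2+\cdots+y_n$, and $\chi(x)=(\chi_1\cdots\chi_n)(\lambda)\,\chi_2(y_2)\cdots\chi_n(y_n)$. So $[A]$ factors through the projection to the $y$-coordinates, and by the projection formula and proper base change
$$\HH_{A,\chi}\cong \R[A']_!\big(\LL_{\chi_2\otimes\cdots\otimes\chi_n}\otimes u^\ast\mathcal E_{\chi_1\cdots\chi_n}\big)[n],$$
where $\mathcal E_\rho:=\R\mathrm{pr}_{!}(\LL_{\psi(\lambda u)}\otimes\LL_{\rho(\lambda)})$ is an object on $\AAA^1_u$ and $u:\Gm^{n-1}\to\AAA^1$ is the affine function above.

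Next I would compute $\mathcal E_\rho$ explicitly.

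If $\rho$ is trivial, $\mathcal E_\rho$ is the Fourier transform of the constant sheaf on $\Gm$ restricted to $\AAA^1$. There is a distinguished triangle relating it to $\Ql[-1]$ on $\AAA^1$ and the punctual object $\Ql(-1)[-2]$ supported at $u=0$. Concretely, the traces are $-1$ off the origin and $q-1$ at the origin, and the extension can be written down geometrically, for instance as $\R j_!$ from the complement of $\{0\}$, glued with the constant sheaf. All of these objects are visibly defined over $\ZZ[1/\ell]$.

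If $\rho$ is non-trivial, the change of variables $\lambda\mapsto\lambda u^{-1}$ on $u\neq 0$, together with vanishing at $u=0$, gives $\mathcal E_\rho\cong j_!\LL_{\bar\rho(u)}\otimes g(\psi,\rho)^{deg}[-1]$. Here the only $\psi$-dependence is the geometrically constant Gauss-sum factor.

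Substituting these into the formula for $\HH_{A,\chi}$ reduces it to a proper direct image, under $[A']$ defined over $\ZZ$, of a tensor product of Kummer sheaves $\LL_{\chi_j}$ ($j\geq 2$) and $u^\ast\LL_{\bar\rho}$ (or the trivial-$\rho$ object), twisted in the second case by $g(\psi,\chi_1\cdots\chi_n)^{deg}$.

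Finally, I would lift the Kummer sheaves. Characters of order dividing $N$ are realized over $R[1/\ell]$ by pushing the Kummer $\mu_N$-torsor $t\mapsto t^N$ on ${\mathbb G}_{m,R}$ out along characters $\mu_N(R)\to\Ql^\times$. Under any specialization $\Spec(\Fq)\to\Spec(R)$ with $N\mid q-1$, the group $\mu_N(R)$ maps isomorphically onto $\mu_N(\Fq)$, so every character $\chi_j$ with $\chi_j^N=\mathbf 1$ is obtained by specializing one of these $N$ lifted sheaves. Defining $\mathcal K_i$ and $\mathcal M_i$ by the formula above, with lifted Kummer sheaves in place of the $\LL_{\chi_j}$, and using proper base change for $\R[A']_!$, gives the stated isomorphisms. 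The indices count as follows. In case (1) the tuple $(\chi_2,\ldots,\chi_n)$ can be any of $N^{n-1}$ choices, and $\chi_1$ is then determined by the triviality of $\chi_1\cdots\chi_n$. In case (2) there are $N^{n-1}$ choices of $(\chi_2,\ldots,\chi_n)$ times $N-1$ choices of the non-trivial product.

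I expect the main obstacle to be the trivial-$\rho$ case. There one must pin down $\mathcal E_{\mathbf 1}$ as an honest object of $\Dbc$ over $\ZZ[1/\ell]$, including its non-split extension structure, not just its class in the Grothendieck group, and check that its formation commutes with specialization. I would try first to realize $\mathcal E_{\mathbf 1}$ intrinsically, as $\R\mathrm{pr}_!$ of the constant sheaf on the variety $\{(\lambda,u):\lambda u=0\}$ modulo the contribution at $\lambda=0$, so that base change applies directly. A secondary point to verify carefully is the compatibility of the chosen identification of $\mu_N(R)$ with $\mu_N(\Fq)$ and the convention relating $\chi$ to $\LL_\chi$.
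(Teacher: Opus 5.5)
Your reduction is the paper's own argument. The scaling substitution (you scale by $x_1$, the paper by $x_n$ via the matrix $U$), the factorization of $[A]$ through $[A']$ using $\sum_j\mathbf a_j=\mathbf 0$, the projection formula, the Gauss-sum computation for $\rho=\chi_1\cdots\chi_n\neq\mathbf 1$, the Kummer lifts through the $[N]$-torsor and the counts all match. Working with a one-variable object $\mathcal E_\rho$ on $\AAA^1_u$ pulled back along $u$ is only a repackaging.

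The gap is the case $\rho=\mathbf 1$, which you leave open, and neither of your suggested realizations produces $\mathcal E_{\mathbf 1}$. Write $j\colon\Gm\hookrightarrow\AAA^1_u$ and $i$ for the inclusion of the origin.
\begin{itemize}
\item $\R\mathrm{pr}_!$ of the constant sheaf on $\{\lambda u=0\}$ has $\mathcal H^0=j_!\Ql$, $\mathcal H^1=0$, $\mathcal H^2=\delta_0(-1)$. Discarding what lies over $\lambda=0$ leaves a punctual object. By contrast, $\mathcal E_{\mathbf 1}$ has $\mathcal H^0=0$, $\mathcal H^1=\Ql$, $\mathcal H^2=\delta_0(-1)$.
\item ``$j_!$ glued with the constant sheaf'' does not pin the object down either. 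Everything sits in the class $c$ of the triangle $\Ql[-1]\to\mathcal E_{\mathbf 1}\to\delta_0(-1)[-2]\xrightarrow{c}\Ql$. Here $c\in\mathrm{Hom}(\delta_0(-1)[-2],\Ql)\cong\mathrm{Hom}(\Ql(-1)[-2],i^!\Ql)\cong\Ql$ is a scalar multiple of the Gysin map, so what must be shown is exactly $c\neq 0$.
\end{itemize}

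Here is how to close it.
\begin{itemize}
\item Integrating over $u$ first gives $\R\Gamma_c(\AAA^1_u,\mathcal E_{\mathbf 1})=\R\Gamma_c(\Gm\times\AAA^1,\LL_{\psi(\lambda u)})=0$. The split object $\Ql[-1]\oplus\delta_0(-1)[-2]$ has non-zero compactly supported cohomology, so $c\neq0$.
\item Comparing with the purity triangle $i_*i^!\Ql\to\Ql\to\R j_*\Ql\to$ then gives $\mathcal E_{\mathbf 1}\cong\R j_*\Ql[-1]$. Equivalently, $\mathcal E_{\mathbf 1}[2]$ is the Fourier transform of the non-split perverse sheaf $j_!\Ql[1]$ on the $\lambda$-line.
\item Since $u$ is smooth, $u^*\mathcal E_{\mathbf 1}\cong\R j'_*\Ql[-1]$, where $j'$ is the inclusion of $\{1+y_2+\cdots+y_n\neq0\}$ into $\Gm^{n-1}$. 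This object is defined over $\ZZ[1/\ell]$. Its formation commutes with every specialization by relative purity, because $\{u=0\}$ is a divisor smooth over the base.
\end{itemize}

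So you can take $\mathcal K_i=\R[A']_!(\LL_{\chi_2\otimes\cdots\otimes\chi_n}\otimes\R j'_*\Ql)[n-1]$ with lifted Kummer sheaves, and the rest of your argument goes through. This is what the paper's mapping cone $\mathcal G$ amounts to. The morphism defining that cone likewise has to be a non-zero multiple of the Gysin map, which is the point your proposal is missing.
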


\begin{proof}
First of all, note that given a specialization $\Spec(\Fq)\to\Spec(R)$, any Kummer sheaf $\LL_{\chi}$ on ${\mathbb G}^n_{m,\Fq}$ with $\chi^N=\mathbf 1$ can be lifted to a lisse sheaf on ${\mathbb G}^n_{m,R[1/\ell]}$: the \'etale cover $[N]:\Gm^n\to\Gm^n$ given by the $N$-th power map is Galois with abelian Galois group $G\cong \mu_N\times\cdots\times\mu_N$ over both $R$ and $\Fq$. In the latter case, $G$ can be identified with a subgroup of $(\Fq^\times)^n$.

Then $\R[N]_*\Ql\cong\bigoplus_{\chi}\LL_\chi$, where the sum is taken over the $\Ql$-valued characters of $G$ and $\LL_\chi$ is the Kummer lisse sheaf on $\Gm^n$ obtained by pulling back $\chi$ to $\pi_1(\Gm^n,\eta)$ (where $\eta$ is a generic geometric point) via the quotient $\pi_1(\Gm^n,\eta)\twoheadrightarrow G$ determined by the finite \'etale cover $[N]$. This decomposition over $R[1/\ell]$ restricts to a similar decomposition over $\Fq$.

But, on the other hand, over $\Fq$ we have the usual decomposition $\R[N]_*\Ql\cong\bigoplus_{\chi}\LL_\chi$ as a direct sum of Kummer sheaves, where the sum is now taken over the set of characters $\chi:(\Fq^\times)^n\to\Ql^\times$ with trivial $N$-th power. Comparing both decompositions, we conclude that every such Kummer sheaf is isomorphic to the restriction to $\Fq$ of a Kummer sheaf defined over $R[1/\ell]$.

In particular, for every morphism $f:X\to\Gm^n$ defined over $R[1/\ell]$ and every character $(\Fq^\times)^n\to\Ql^\times$ with trivial $N$-th power, the sheaf $\LL_{\chi(f_{\Fq})}:=f_{\Fq}^\ast\LL_\chi$ on $X_{\Fq}$ is isomorphic to the restriction to $\Fq$ of a sheaf on $X$.

We can split the set of characters of $\mu_N\times\cdots\times\mu_N$ into two disjoint subsets $\mathcal A$ and $\mathcal B$ of cardinalities $N^{n-1}$ and $N^{n-1}(N-1)$ respectively according to whether their restriction to the diagonal subgroup of $\mu_N^n$ is trivial or not (equivalently, whether the restriction of the associated Kummer sheaf to the diagonal $\Gm\hookrightarrow\Gm^n$ is trivial). Under the above correspondence, Kummer sheaves for characters in $\mathcal A$ and $\mathcal B$ correspond to Kummer sheaves for characters $\chi=\otimes_{i=1}^n\chi_i:(\Fq^\times)^n\to\Ql^\times$ such what $\chi_1\cdots\chi_n$ is trivial (resp. not trivial).

 For a given $\Fq$ with an embedding $\Spec(\Fq)\to\Spec(R)$ and a character $\chi:(\Fq^\times)^n\to\Ql^\times$, we have 
 $$
\mathcal H_{A,\chi}=\R[A]_!(\LL_{\psi(x_1+\cdots+x_n)}\otimes\LL_\chi)[n].$$
Let $\phi:\Gm^n\to\Gm^n$ be the automorphism given by the invertible matrix
$$
U=\begin{pmatrix}
 1 & 0 & \cdots & 0 & 1 \\
 0 & 1 & \cdots & 0 & 1 \\
 \vdots & \vdots & \ddots & \vdots & \vdots \\
 0 & 0 & \cdots & 1 & 1 \\
 0 & 0 & \cdots & 0 & 1
\end{pmatrix}
$$
that is,
$$(x_1,\ldots,x_{n-1},x_n)\mapsto(x_1x_n,\ldots,x_{n-1}x_n,x_n),$$
then
$$\HH_{A,\chi}=\R([A]\phi)_!\phi^*(\LL_{\psi(x_1+\cdots+x_n)}\otimes\LL_{\chi_1(x_1)}\otimes\cdots\otimes\LL_{\chi_{n-1}(x_{n-1})}\otimes \LL_{\chi_{n}(x_{n})})[n]\cong
$$
$$
\cong \R[AU]_!(\LL_{\psi((x_1+\cdots+x_{n-1}+1)x_n)}\otimes\LL_{\chi_1(x_1)}\otimes\cdots\otimes\LL_{\chi_{n-1}(x_{n-1})}\otimes \LL_{\chi_1\cdots\chi_{n}(x_{n})})[n].
$$
Let $B$ be the matrix obtained from $A$ by removing the last column, and note that $AU=B\sqcup(\mathbf 0)=B(I_{n-1}\sqcup(\mathbf 0))$. Then $[AU]=[B][I_{n-1}\sqcup(\mathbf 0)]=[B]\pi$, where $\pi:\Gm^n\to\Gm^{n-1}$ is the projection onto the first $n-1$ coordinates. So
\begin{equation}\label{RB}
\HH_{A,\chi}\cong \R[B]_!\R\pi_!(\LL_{\psi((x_1+\cdots+x_{n-1}+1)x_n)}\otimes\LL_{\tilde\chi}\otimes \LL_{\chi_1\cdots\chi_{n}(x_{n})})[n]\cong
\end{equation}
$$
\cong
\R[B]_!\left(\LL_{\tilde\chi}\otimes R\pi_!(\LL_{\psi((x_1+\cdots+x_{n-1}+1)x_n)}\otimes\LL_{\chi_1\cdots\chi_{n}(x_{n})})\right)[n]
$$
by the projection formula, where $\tilde\chi:(\Fq^\times)^{n-1}\to\Ql^\times$ is the character $(x_1,\ldots,x_{n-1})\mapsto \chi_1(x_1)\cdots\chi_{n-1}(x_{n-1}).$

Let $\iota_Y:Y\hookrightarrow \Gm^n$ be the inclusion of the closed subset defined by $x_1+\cdots+x_{n-1}+1=0$, and $\iota_Z:Z\hookrightarrow\Gm^n$ its open complement. We have an exact triangle
\begin{equation}\label{triangle}
\R(\pi\iota_Z)_!\iota_Z^*(\LL_{\psi((x_1+\cdots+x_{n-1}+1)x_n)}\otimes\LL_{\chi_1\cdots\chi_{n}(x_{n})})\to
\end{equation}
$$
\to \R\pi_!(\LL_{\psi((x_1+\cdots+x_{n-1}+1)x_n)}\otimes\LL_{\chi_1\cdots\chi_{n}(x_{n})})\to
$$
$$
\to \R(\pi\iota_Y)_!\iota_Y^*(\LL_{\psi((x_1+\cdots+x_{n-1}+1)x_n)}\otimes\LL_{\chi_1\cdots\chi_{n}(x_{n})}) \to.
$$
Suppose that $\chi_1\cdots\chi_n$ is non-trivial. On $Y$ $\LL_{\psi((x_1+\cdots+x_{n-1}+1)x_n)}$ is trivial, so 
$$
\R(\pi\iota_Y)_!\iota_Y^*(\LL_{\psi((x_1+\cdots+x_{n-1}+1)x_n)}\otimes\LL_{\chi_1\cdots\chi_{n}(x_{n})})=
$$
$$
=\R(\pi\iota_Y)_!(\LL_{\chi_1\cdots\chi_{n}(x_{n})|Y})=0,
$$
since $\LL_{\chi_1\cdots\chi_n(x_n)}$ has trivial cohomology on $\Gm$. So we get an isomorphism in $\Dbc(\Gm^r,\Ql)$:
$$
\R(\pi\iota_Z)_!\iota_Z^*(\LL_{\psi((x_1+\cdots+x_{n-1}+1)x_n)}\otimes\LL_{\chi_1\cdots\chi_{n}(x_{n})})\cong
$$
$$
\cong \R\pi_!(\LL_{\psi((x_1+\cdots+x_{n-1}+1)x_n)}\otimes\LL_{\chi_1\cdots\chi_{n}(x_{n})}).
$$
Now on $Z$ we consider the automorphism $\alpha:(x_1,\ldots,x_{n-1},x_n)\mapsto(x_1,\ldots,x_{n-1},x_n(x_1+\cdots+x_{n-1}+1)^{-1})$, which preserves the fibres of $\pi\iota_Z$, and we get
$$
\R(\pi\iota_Z)_!\iota_Z^*(\LL_{\psi((x_1+\cdots+x_{n-1}+1)x_n)}\otimes\LL_{\chi_1\cdots\chi_{n}(x_{n})})\cong
$$
$$
\cong \R(\pi\iota_Z)_!(\LL_{\psi(x_n)}\otimes\LL_{\chi_1\cdots\chi_{n}(x_{n}(x_1+\cdots+x_{n-1}+1)^{-1})})\cong
$$
$$
\cong \LL_{\overline{\chi_1\cdots\chi_n}(x_1+\cdots+x_{n-1}+1)}\otimes \R(\pi\iota_Z)_!(\LL_{\psi(x_n)}\otimes\LL_{\chi_1\cdots\chi_{n}(x_{n})})\cong
$$
$$
\cong
\LL_{\overline{\chi_1\cdots\chi_n}(x_1+\cdots+x_{n-1}+1)}\otimes \R\Gamma_c(\Gm,\LL_\psi\otimes\LL_{\chi_1\cdots\chi_n})=
$$
$$
=\LL_{\overline{\chi_1\cdots\chi_n}(x_1+\cdots+x_{n-1}+1)}\otimes g(\psi,\chi_1\cdots\chi_n)^{deg}[-1].
$$
We conclude that
$$
\HH_{A,\chi}\cong \R[B]_!(\LL_{\tilde\chi}\otimes\LL_{\overline{\chi_1\cdots\chi_n}(x_1+\cdots+x_{n-1}+1)})\otimes g(\psi,\chi_1\cdots\chi_n)^{deg}[n-1].
$$

If $\chi_1\cdots\chi_n$ is trivial, then (\ref{RB}) becomes
$$
\HH_{A,\chi}\cong\R[B]_!\left(\LL_{\tilde\chi}\otimes \R\pi_!(\LL_{\psi((x_1+\cdots+x_{n-1}+1)x_n)})\right)[n].
$$

Let $\bar\pi:\Gm^{n-1}\times\AAA^1\to\Gm^{n-1}$ and $\pi_0:\Gm^{n-1}\times\{0\}\to\Gm^{n-1}$ be the projections (note that $\pi_0$ is an isomorphism). We have exact triangles
$$
\R\pi_!(\LL_{\psi((x_1+\cdots+x_{n-1}+1)x_n)})\to\R\bar\pi_!(\LL_{\psi((x_1+\cdots+x_{n-1}+1)x_n)}) 
\to
$$
$$
\to\R\pi_{0!}(\LL_{\psi((x_1+\cdots+x_{n-1}+1)x_n)})=\R\pi_{0!}\Ql\cong\Ql\to.
$$
and (with $Z$ and $Y$ subsets of $\Gm^{n-1}\times\AAA^1$ defined as above)
$$
\R(\bar\pi\iota_Z)_!\iota_Z^*(\LL_{\psi((x_1+\cdots+x_{n-1}+1)x_n)})
\to \R\bar\pi_!(\LL_{\psi((x_1+\cdots+x_{n-1}+1)x_n)})\to
$$
$$
\to \R(\bar\pi\iota_Y)_!\iota_Y^*(\LL_{\psi((x_1+\cdots+x_{n-1}+1)x_n)})=\R(\bar\pi\iota_Y)_!\Ql\to.
$$
Applying the automorphism $\alpha$ again on $Z$, we get
$$
\R(\bar\pi\iota_Z)_!\iota_Z^*(\LL_{\psi((x_1+\cdots+x_{n-1}+1)x_n)})\cong\R(\bar\pi\iota_Z)_!\LL_{\psi(x_n)}=0,
$$
since the cohomology of $\LL_\psi$ on $\AAA^1$ is trivial. So we get an isomorphism
$$
\R\bar\pi_!(\LL_{\psi((x_1+\cdots+x_{n-1}+1)x_n)})\cong\R(\bar\pi\iota_Y)_!\Ql
$$
and an exact triangle
$$
\R\pi_!(\LL_{\psi((x_1+\cdots+x_{n-1}+1)x_n)})\to\R(\bar\pi\iota_Y)_!\Ql 
\to
\R\pi_{0!}\Ql\cong\Ql\to.
$$
where the second morphism is the one from the excision triangle for the split $Y\backslash(\Gm^{n-1}\times\{0\})\hookrightarrow Y\hookleftarrow \Gm^{n-1}\times\{0\}$. So $\R\pi_!(\LL_{\psi((x_1+\cdots+x_{n-1}+1)x_n)})\cong{\mathcal G}[-1]$, where $\mathcal G$ is the mapping cone of the excision morphism $\R(\bar\pi\iota_Y)_!\Ql 
\to
\R\pi_{0!}\Ql$, and $\HH_{A,\chi}=\R[B]_!(\LL_{\tilde\chi}\otimes{\mathcal G})[n-1]$.

As a conclusion, we can take as $\mathcal K_i$ the $\R[B]_!(\LL_{\tilde\chi}\otimes{\mathcal G})[n-1]$ for the $N^{n-1}$ characters $\chi$ in $\mathcal A$, and as $\mathcal M_i$ the $\R[B]_!(\LL_{\tilde\chi}\otimes\LL_{\overline{\chi_1\cdots\chi_n}(x_1+\cdots+x_{n-1}+1)})[n-1]$ for the $N^{n-1}(N-1)$ characters $\chi$ in $\mathcal B$.
\end{proof}

\bibliographystyle{amsalpha}
\bibliography{bibliography}

\end{document}